\newcommand{\nc}{\newcommand}
\nc{\fg}{\mathfrak{f} } \nc{\vg}{\mathfrak{v} } \nc{\wg}{\mathfrak{w} }
\nc{\zg}{\mathfrak{z} } \nc{\ngo}{\mathfrak{n} } \nc{\kg}{\mathfrak{k} }
\nc{\mg}{\mathfrak{m} } \nc{\bg}{\mathfrak{b} } \nc{\ggo}{\mathfrak{g} }
\nc{\ggob}{\overline{\mathfrak{g}} } \nc{\sog}{\mathfrak{so} }
\nc{\sug}{\mathfrak{su} } \nc{\spg}{\mathfrak{sp} } \nc{\slg}{\mathfrak{sl} }
\nc{\glg}{\mathfrak{gl} } \nc{\cg}{\mathfrak{c} } \nc{\rg}{\mathfrak{r} }
\nc{\hg}{\mathfrak{h} } \nc{\tg}{\mathfrak{t} } \nc{\ug}{\mathfrak{u} }
\nc{\dg}{\mathfrak{d} } \nc{\ag}{\mathfrak{a} } \nc{\pg}{\mathfrak{p} }
\nc{\sg}{\mathfrak{s} } \nc{\affg}{\mathfrak{aff} } \nc{\qg}{\mathfrak{q} }
\nc{\pca}{\mathcal{P}} \nc{\nca}{\mathcal{N}} \nc{\lca}{\mathcal{L}}
\nc{\oca}{\mathcal{O}} \nc{\mca}{\mathcal{M}} \nc{\tca}{\mathcal{T}}
\nc{\aca}{\mathcal{A}} \nc{\cca}{\mathcal{C}} \nc{\gca}{\mathcal{G}}
\nc{\sca}{\mathcal{S}} \nc{\hca}{\mathcal{H}} \nc{\bca}{\mathcal{B}}
\nc{\dca}{\mathcal{D}} \nc{\wca}{\mathcal{W}} 
\nc{\val}{\operatorname{val}}
\nc{\vp}{\varphi} \nc{\ddt}{\tfrac{d}{dt}} \nc{\dds}{\frac{d}{ds}}
\nc{\dpar}{\tfrac{\partial}{\partial t}} \nc{\im}{\mathrm{i}}
\nc{\SO}{\mathrm{SO}} \nc{\Spe}{\mathrm{Sp}} \nc{\Sl}{\mathrm{SL}}
\nc{\SU}{\mathrm{SU}} \nc{\Or}{\mathrm{O}} \nc{\U}{\mathrm{U}} \nc{\Gl}{\mathrm{GL}}
\nc{\Se}{\mathrm{S}} \nc{\Cl}{\mathrm{Cl}} \nc{\Spein}{\mathrm{Spin}}
\nc{\Pin}{\mathrm{Pin}} \nc{\G}{\mathrm{GL}_n} \nc{\g}{\mathfrak{gl}_n}
\nc{\RR}{{\Bbb R}} \nc{\HH}{{\Bbb H}} \nc{\CC}{{\Bbb C}} \nc{\ZZ}{{\Bbb Z}}
\nc{\FF}{{\Bbb F}} \nc{\NN}{{\Bbb N}} \nc{\QQ}{{\Bbb Q}} \nc{\PP}{{\Bbb P}} \nc{\OO}{{\Bbb O}}
\nc{\vs}{\vspace{.2cm}} \nc{\vsp}{\vspace{1cm}} \nc{\ip}{\langle\cdot,\cdot\rangle}
\nc{\ipp}{(\cdot,\cdot)} \nc{\la}{\langle} \nc{\ra}{\rangle} \nc{\unm}{\tfrac{1}{2}} \nc{\unt}{\tfrac{1}{3}}
\nc{\unc}{\tfrac{1}{4}} \nc{\uns}{\tfrac{1}{6}} \nc{\no}{ \noindent}
\nc{\lam}{\Lambda^2(\RR^n)^*\otimes\RR^n} \nc{\tangz}{{\rm T}^{\rm Zar}}
\nc{\nor}{{\sf n}}  \nc{\mum}{/\!\!/} \nc{\kir}{/\!\!/\!\!/}
\nc{\Ri}{\tfrac{4\Ric_{\mu}}{||\mu||^2}} \nc{\ds}{\displaystyle}
\nc{\ben}{\begin{enumerate}} \nc{\een}{\end{enumerate}} \nc{\f}{\frac}
\nc{\lb}{[\cdot,\cdot]} \nc{\isn}{\tfrac{1}{||v||^2}}
\nc{\gkp}{(\ggo=\kg\oplus\pg,\ip)} \nc{\ukh}{(\ug=\kg\oplus\hg,\ip)}
\nc{\tgkp}{(\tilde{\ggo}=\kg\oplus\pg,\ip)}
\nc{\wt}{\widetilde} \nc{\Mm}{M}
\nc{\iop}{\mathtt{i}} \nc{\jop}{\mathtt{j}}
\nc{\Hess}{\operatorname{Hess}} \nc{\ad}{\operatorname{ad}}
\nc{\Ad}{\operatorname{Ad}} \nc{\rank}{\operatorname{rank}}
\nc{\Irr}{\operatorname{Irr}} \nc{\End}{\operatorname{End}}
\nc{\Aut}{\operatorname{Aut}} \nc{\Inn}{\operatorname{Inn}}
\nc{\Der}{\operatorname{Der}} \nc{\Ker}{\operatorname{Ker}}
\nc{\Iso}{\operatorname{Iso}} \nc{\Diff}{\operatorname{Diff}}
\nc{\Lie}{\operatorname{L}} \nc{\tr}{\operatorname{tr}} \nc{\dif}{\operatorname{d}}
\nc{\sen}{\operatorname{sen}} \nc{\modu}{\operatorname{mod}}
\nc{\CRic}{\operatorname{PP}} \nc{\Cric}{\operatorname{P}} \nc{\Ricci}{\operatorname{Ric}}
\nc{\sym}{\operatorname{sym}} \nc{\herm}{\operatorname{herm}} \nc{\symac}{\operatorname{sym^{ac}}}
\nc{\symc}{\operatorname{sym^{c}}} \nc{\scalar}{\operatorname{scal}}
\nc{\grad}{\operatorname{grad}} \nc{\ricci}{\operatorname{Rc}}
\nc{\Nor}{\operatorname{Norm}}  \nc{\ricc}{\operatorname{Rc^{c}}}
\nc{\Ricc}{\operatorname{Ric^{c}}} \nc{\ricac}{\operatorname{Rc^{ac}}}
\nc{\Ricac}{\operatorname{Ric^{ac}}} \nc{\Riem}{\operatorname{Rm}} \nc{\Sec}{\operatorname{Sec}}
\nc{\riccig}{\operatorname{ric^{\gamma}}} \nc{\Rin}{\operatorname{M}}
\nc{\Le}{\operatorname{L}} \nc{\tang}{\operatorname{T}}
\nc{\level}{\operatorname{level}} \nc{\rad}{\operatorname{r}}
\nc{\abel}{\operatorname{ab}} \nc{\CH}{\operatorname{CH}} \nc{\Cone}{{\mathcal C}} \nc{\CCone}{\operatorname{CC}} \nc{\CP}{{\mathcal P}}
\nc{\mcc}{\operatorname{mcc}} \nc{\Adj}{\operatorname{Adj}}
\nc{\Order}{\operatorname{O}}  \nc{\inj}{\operatorname{inj}} \nc{\proy}{\operatorname{pr}}
\nc{\vol}{\operatorname{vol}} \nc{\Diag}{\operatorname{Diag}} \nc{\Diagg}{\operatorname{Diag}}
\nc{\Spec}{\operatorname{Spec}} \nc{\Ima}{\operatorname{Im}} \nc{\Rea}{\operatorname{Re}}
\nc{\spann}{\operatorname{span}} \nc{\Aff}{\operatorname{Aff}} \nc{\mm}{\operatorname{m}} 
\nc{\Crit}{\operatorname{Crit}} \nc{\En}{\operatorname{E}}
\theoremstyle{plain}
\newtheorem{theorem}{Theorem}[section]
\newtheorem{proposition}[theorem]{Proposition}
\newtheorem{corollary}[theorem]{Corollary}
\newtheorem{lemma}[theorem]{Lemma}
\theoremstyle{definition}
\newtheorem{definition}[theorem]{Definition}
\theoremstyle{remark}
\newtheorem{remark}[theorem]{Remark}
\newtheorem{example}[theorem]{Example}
\title{A new example of a compact ERP $G_2$-structure}
\author{Ines Kath} \author{Jorge Lauret} 
\address{Institut f\"ur Mathematik und Informatik, Universit\"at Greifswald, Germany}
\email{ines.kath@uni-greifswald.de}
\address{Universidad Nacional de C\'ordoba and CIEM, CONICET, Argentina}
\email{jorgelauret@unc.edu.ar}
\thanks{The second author gratefully acknowledges support from Univ. Nac. de C\'ordoba, Argentina}
\begin{document}

\maketitle

\begin{abstract}
We provide the second known example of an extremally Ricci pinched closed $G_2$-structure on a compact $7$-manifold, by finding a lattice in the only unimodular solvable Lie group admitting a left-invariant $G_2$-structure.  Furthermore, the Laplacian coflow and its solitons are studied on a $6$-parameter family of left-invariant coclosed $G_2$-structures on this Lie group.  In this way, we obtain a $4$-parameter subfamily of expanding solitons.  The family is locally pairwise non-equivalent.  
\end{abstract}

\tableofcontents

\section{Introduction}\label{intro2}

A $G_2$-{\it structure} on a $7$-dimensional differentiable manifold $M$ can be identified with a positive (or definite) differential $3$-form $\vp$ on $M$, which naturally determines a Riemannian metric $g$ on $M$ and an orientation (see Section \ref{geom}).  Closed $G_2$-structures (i.e., $d\vp=0$) are very natural candidates to be deformed via the Laplacian flow $\dpar\vp(t) = \Delta\vp(t)$ toward {\it torsion-free} $G_2$-structures (i.e., $d\vp=0$ and $d\ast\vp=0$), which are the ones producing Ricci flat Riemannian metrics with holonomy contained in $G_2$ (see the survey \cite{Lty} for an account of recent advances).  On the other hand, closed $G_2$-structures are intended to play the same role as almost-K\"ahler structures in almost-hermitian geometry, so they have been extensively studied from different points of view (see e.g.\ \cite{Bry,ClyIvn1, ClyIvn2,FrnFinRff,LtyWei,Fields,PdsRff}). 

In the search for distinguished closed $G_2$-structures on a given compact manifold $M$, Bryant discovered the following remarkable curvature estimate (see \cite[Corollary 3]{Bry}):
\begin{equation}\label{estB}
\int_M \scalar^2 \ast 1 \leq 3\int_M |\Ricci|^2 \ast 1,
\end{equation}
where $\scalar$ and $\Ricci$ are respectively the scalar and Ricci curvature of $(M,g)$, and called a closed $G_2$-structure {\it extremally Ricci-pinched} (ERP for short) when equality holds in \eqref{estB} (see \cite[Remark 13]{Bry}).  Thus ERP structures are in some sense the closest you can get to Einstein among closed $G_2$-structures.  Bryant also proved that ERP $G_2$-structures are characterized by,
\begin{equation}\label{ERPeq}
d\tau = \tfrac{1}{6}|\tau|^2\vp + \tfrac{1}{6}\ast(\tau\wedge\tau),
\end{equation}
where $\tau:=-\ast d\ast\vp$ is the torsion $2$-form of $\vp$, and that in the compact case, this is indeed the only way in which $d\tau$ can quadratically depend on $\tau$ (see \cite[(4.66)]{Bry}).  A closed $G_2$-structure on any $7$-manifold (not necessarily compact) such that \eqref{ERPeq} holds is also called ERP.   

The ERP condition turned out to be really strong.  A first example was provided by Bryant in \cite{Bry} on the homogeneous space $\Sl_2(\CC)\ltimes\CC^2/\SU(2)$.  More recently, the classification list of all left-invariant ERP $G_2$-structures on Lie groups up to equivalence and scaling was obtained in \cite{ERP,ERP2}, which consists of only five simply connected completely solvable Lie groups (in particular all diffeomorphic to $\RR^7$), each one admitting a single structure (one of them is equivalent to Bryant's example).  These five structures are also {\it steady Laplacian solitons}:  the Laplacian flow solution starting at $\vp$ is simply given by $\vp(t)=f(t)^*\vp$, for some $f(t)\in\Diff(M)$, $t\in\RR$ (the set of ERP $G_2$-structures is known to be invariant under the Laplacian flow and the solutions are always eternal, see  \cite{FinRff2}). 

The only other known ERP $G_2$-structures are many non-homogeneous examples found in \cite{Bll} and a continuous family of left-invariant structures on Lie groups given in \cite{FinRff2}, which are all equivalent to Bryant's example but the Lie groups involved are pairwise non-isomorphic (these are precisely the structures to be added in order to obtain a classification up to equivariant equivalence, see \cite{ERP2}).   

Remarkably, the only known compact example in the literature so far is the quotient of Bryant's homogeneous example $\Sl_2(\CC)\ltimes\CC^2/\SU(2)$ by a cocompact discrete subgroup of $\Sl_2(\CC)\ltimes\CC^2$, giving rise to a locally homogeneous manifold.  It was conjectured by Cleyton and Ivanov in the Introduction of \cite{ClyIvn2} that this is the only compact ERP structure up to local equivalence.  The main result of this paper is to disprove the conjecture by providing a new example of an ERP $G_2$-structure on a compact manifold.  We construct a {\it lattice} (i.e., a discrete and cocompact subgroup) in one of the five solvable Lie groups appearing in the classification list given in \cite{ERP2}, which will be denoted by $G_J$ from now on.  Such example was originally given in \cite[Example 4.7]{LS-ERP} and it is the only one in the list which is unimodular, so the only one with possibilities to admit a lattice.  The alternative presentation of Bryant's example as a left-invariant $G_2$-structure on a solvable Lie group $S$ given in \cite[Remark 6.7]{ClyIvn2} (see also {\cite[Examples 4.13, 4.10]{LS-ERP}), allows us to deduce that it is not equivalent to our example on $G_J$, as $S$ and $G_J$ are non-isomorphic simply connected completely solvable Lie groups (see \cite{Alk} or Section \ref{equiv0}).  In particular, the corresponding compact quotients cannot be locally equivalent.  

The Lie algebra of $G_J$ is given by $\ggo_J=\ag\ltimes\RR^4$, where $\ag\subset\slg_4(\RR)$ is the subspace of all diagonal matrices.  Note that $\ggo_J$ is isomorphic to any semidirect product of the form $\ag\ltimes\ngo$, where $\dim{\ngo}=4$, $[\ngo,\ngo]=0$ and $\ag$ is any maximal $\RR$-split torus of $\slg(\ngo)$ (cf.\ \cite[Example 5.4]{ERP}).  The lattices $\Gamma\subset G_J$ we found are of the form 
$$
\Gamma:=\exp(\ZZ A+\ZZ B+\ZZ C)\ltimes\phi(\ZZ^4),
$$  
where $\{ A,B,C\}$ is a basis of $\ag$ such that $e^A$, $e^B$ and $e^C$ leave invariant the lattice $\phi(\ZZ^4)$ of $\RR^4$ for a suitable $\phi\in\Gl_4(\RR)$. The maps $e^A$, $e^B$ and $e^C$ and the automorphism $\phi$ are constructed as follows. We take a totally real quartic number field $K$ and consider the ring $\oca_K$ of integers. Clearly, each unit in $\oca_K$ acts by multiplication on $\oca_K$. With respect to a basis, this action is given by an integer matrix, and we can choose three multiplicatively independent units. Since the units commute, also the corresponding integer matrices commute and so they are simultaneously diagonalisable over $\RR$ by a matrix $\phi$. The diagonal matrices that we obtain in this way can be used as $e^A$, $e^B$ and $e^C$. Since the integer matrices we found leave invariant $\ZZ^4$, the corresponding diagonalised matrices leave invariant $\phi(\ZZ^4)$. The multiplicatively independency of the chosen units will ensure, that $\Gamma$ is indeed a lattice. This will be explained in more detail in Section~\ref{ant}. A first explicit example can be found in Section~\ref{latt-sec}.

In Section \ref{geom}, we study a $9$-parameter family of left-invariant $G_2$-structures on the Lie group $G_J$.  A very particular feature is that each structure in the family is rigid, in the sense that it has a pairwise non-equivalent open neighborhood.  Note that all these structures descend to the compact manifold $M=G_J/\Gamma$, where $\Gamma$ is any of the lattices we have found, to become locally homogeneous $G_2$-structures.  We use results in \cite{Ncl} to provide formulas for the torsion forms and Laplacians, obtaining that all these $G_2$-structures belong to the class $\wca_2\oplus\wca_3$ (i.e., $\tau_0=0$ and $\tau_1=0$), which from the spinorial viewpoint it is equivalent to have a harmonic associated unit spinor (see \cite{Agr}).  Finally, the Laplacian coflow and its solitons are studied on the $6$-parameter subfamily of coclosed $G_2$-structures.  We prove long-time existence among two different subfamilies and obtain a $4$-parameter family of expanding Laplacian coflow solitons.   

\vs \noindent {\it Acknowledgements.}  We thank Alberto Raffero for very helpful comments.

\section{Explicit example of a lattice}\label{latt-sec}

We exhibit in this section an explicit example of a lattice in the simply connected solvable Lie group $G_J$ with Lie algebra $\ggo_J=\ag\ltimes\RR^4$, where $\ag$ is the subspace of all diagonal matrices in $\slg_4(\RR)$. Note that $G_J=\exp(\ag)\ltimes\RR^4$.  

We consider the matrices
\begin{equation}\label{matr}
A_1:=\left[\begin{smallmatrix} 
0&0&-1&-1\\ 
0&0&-4&-5\\
1&0&4&0\\
0&1&1&5
\end{smallmatrix}\right], \qquad 
A_2:=\left[\begin{smallmatrix} 
3&-1&-1&-1\\ 
-4&-1&-5&-5\\
0&0&3&-1\\
1&1&1&4
\end{smallmatrix}\right], \qquad
A_3:=\left[\begin{smallmatrix} 
4&1&2&3\\ 
3&8&9&14\\
-1&-1&0&-3\\
-1&-2&-3&-3
\end{smallmatrix}\right],
\end{equation}
which have determinant one and so they all belong to $\Sl_4(\ZZ)$.  They also have the same spectrum, consisting of four different positive real numbers $\{u_1^2,\dots,u_4^2\}$, where 
\begin{equation}\label{u1234}
u_1:=2\cos\left(2\pi \tfrac{1}{15}\right), \quad u_2:=2\cos\left(2\pi \tfrac{2}{15}\right), \quad u_3:=2\cos\left(2\pi \tfrac{4}{15}\right), \quad u_4:=2\cos\left(2\pi \tfrac{7}{15}\right).
\end{equation}
Moreover, it is straightforward to check that they pairwise commute and simultaneously diagonalize as follows: 
\begin{eqnarray*}
&\phi A_1\phi^{-1}= \Diag(u_1^2,u_2^2,u_3^2,u_4^2 ), \quad
\phi A_2\phi^{-1}=\Diag (u_2^2,u_3^2,u_4^2,u_1^2), &\\
&\phi A_3\phi^{-1}=\Diag(u_3^2,u_4^2,u_1^2,u_2^2), &
\end{eqnarray*} 
where $\phi$ equals the Vandermonde matrix $V(u_1,u_2,u_3,u_4)$, that is,
$$
\phi=\left[ \begin{smallmatrix}  1&u_1&u_1^2&u_1^3\\ 1&u_2&u_2^2&u_2^3\\1&u_3&u_3^2&u_3^3\\1&u_4&u_4^2&u_4^3 \end{smallmatrix}\right].
$$
The linear maps $\phi A_j\phi^{-1}$, $j=1,2,3$, generate a lattice $\Lambda$ of $\exp(\ag)$.  
Since they leave invariant the subset $\phi(\ZZ^4)\subset\RR^4$, the set
$
\Gamma:=\Lambda\ltimes\phi(\ZZ^4)
$ 
is a subgroup of $G_J$.  Moreover, $\Gamma$ is a lattice in $G_J$ since $\Lambda\subset\exp(\ag)$ and $\phi(\ZZ^4)\subset\RR^4$ are both discrete and cocompact.   

It was proved in \cite[Example 4.7]{LS-ERP} that if we call $\{ e_3,\dots,e_6\}$ the standard basis of $\RR^4$ and consider the basis of $\ag$ given by  
$$
e_7:=\Diag(1,1,-1,-1), \quad
e_1:=\Diag(1,-1,1,-1), \quad
e_2:=\Diag(1,-1,-1,1),   
$$
then the left-invariant $G_2$-structure on $G_J$ determined by the positive $3$-form 
$$
\vp=e^{127}+e^{347}+e^{567}+e^{135}-e^{146}-e^{236}-e^{245},
$$
is ERP.  Thus $\vp$ also defines an ERP $G_2$-structure on the compact manifold $M=G_J/\Gamma$ (see Example \ref{abc} for more information).

\section{Algebraic number theory and lattices}\label{ant}

In this section we want to explain the idea behind the lattice described in Section~\ref{latt-sec} and a way to get more lattices. 

Let $T$ be the identity component of an $\RR$-split torus in $\Sl_4(\RR)$ and define $G_T:=T\ltimes \RR^4$. In this notation, $G_J=G_{T^0}$, where $T^0:=\exp(\ag)$. Then $G_T$ is isomorphic to $G_J$ for each such $T$. Indeed, choose $\phi\in \Gl_4(\RR)$ such that $\phi T \phi^{-1}=T^0$. Then we can define an isomorphism by 
\begin{equation}\label{phi}
G_T \longrightarrow G_J,\quad (A,b) \longmapsto (\phi A \phi^{-1}, \phi(b)). 
\end{equation}
In the following, we will construct several such tori and for each torus $T$ a lattice in $G_T$. Of course, each such lattice is mapped to a lattice in $G_J$ by the isomorphism defined in~\eqref{phi}.

Let us take an arbitrary totally real quartic number field $K:=\QQ(u)$. Let $$p(t)=t^4+a_3t^3+\dots+a_0\in\QQ[t]$$ be the minimal polynomial of $u$. We consider $K$ as a vector space over $\QQ$.  Multiplication by $u$ is an invertible linear map on $K$. Let $M\in\Gl_4(\QQ)$ be the matrix of this map with respect to the basis $1, u, u^2, u^3$ of the vector space $K$ (over $\QQ$). Then $M$ is the companion matrix of $p$, that is,
\begin{equation}\label{comp}
 M=\left[\begin{smallmatrix} 
0&0&0&-a_0 \\
1&0&0&-a_1 \\
0&1&0&-a_2 \\
0&0&1&-a_3 \end{smallmatrix}
\right],
\end{equation}
and the eigenvalues of $M$ are the roots of $p$. In particular, $M$ is diagonalisable and all its eigenvalues are real. The ring $\oca_K$ of integers in $K$ is a free $\ZZ$-module of rank four. Since $K$ is totally real, the group of units in $\oca_K$ has rank three by Dirichlet's unit theorem. Hence one can choose three multiplicatively independent units. These units act as commuting automorphisms on $\oca_K$. In particular, these actions are given by integer matrices $A_1,A_2,A_3\in \Gl_4(\ZZ)$ with respect to an integral basis. The eigenvalues of these matrices are real, since the units are linear combinations of $1, u, u^2, u^3$, thus the corresponding matrices are linear combinations of $I_4,M, M^2, M^3$ and these matrices are simultaneously diagonalisable with real eigenvalues. In particular, $\tilde T:=\spann_\RR\{I_4,M, M^2, M^3\}\cap \Gl_4(\RR)$ is an $\RR$-split torus. Let $T$ be its identity component. The linear maps $A_1,A_2,A_3$ generate a lattice in $\tilde T$, and if $\Lambda$ is its intersection with $T$, then $\Gamma:=\Lambda\ltimes \ZZ^4$ is a lattice in $G_T$.

\begin{definition}
Two lattices $\Gamma_1$ and $\Gamma_2$ in a Lie group $G$ are called commensurable if there exist finite index subgroups $\Gamma_1'\subset\Gamma_1$, $\Gamma_2'\subset\Gamma_2$ and an automorphism $\Phi$ of $G$ such that $\Phi(\Gamma_1')=\Gamma_2'$.
\end{definition}

\begin{lemma}\label{comm}
Let $\Gamma_j=\Lambda_j\ltimes \ZZ^4\subset G_{T_j}$, $j=1,2$, be lattices as constructed above starting from totally real quartic fields $\QQ(u_j)$ and let $p_j$ be the minimal polynomial of $u_j$. If $\Gamma_1\subset G_{T_1}\cong G_J$ and $\Gamma_2 \subset G_{T_2}\cong G_J$ are commensurable, then the splitting fields of $p_1$ and $p_2$ are isomorphic.
\end{lemma}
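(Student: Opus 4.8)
The plan is to reduce the commensurability of the lattices to an arithmetic statement about the tori $T_1,T_2$ and then translate it into a statement about the fields $\QQ(u_j)$ via the correspondence between $\RR$-split tori (up to conjugacy) and their lattices of integral points. First I would analyze what an automorphism $\Phi$ of $G_J$ looks like. Since $\ggo_J=\ag\ltimes\RR^4$ with $[\RR^4,\RR^4]=0$ and $\ag$ abelian, the nilradical is $\RR^4$ and it is preserved by every automorphism; hence $\Phi$ induces a linear automorphism of $\RR^4$ and an automorphism of the quotient $\ggo_J/\RR^4\cong\ag$. Concretely, at the Lie algebra level $D\Phi$ acts as $b\mapsto Sb$ on $\RR^4$ and $X\mapsto PXP^{-1}$ on $\ag\subset\slg_4(\RR)$ for suitable $S\in\Gl_4(\RR)$ and $P$ normalizing $\ag$; moreover the semidirect-product relation forces $S$ to conjugate the $\ag$-action into the $P\ag P^{-1}$-action, so after absorbing $P$ one may assume $S=P$ up to the centralizer, i.e. $\Phi$ is essentially conjugation by an element of $\Gl_4(\RR)$ together with the diagonal permutation symmetries of $\ag$. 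The upshot is: an automorphism of $G_J$ carrying $G_{T_1}$-data to $G_{T_2}$-data amounts to some $Q\in\Gl_4(\RR)$ with $Q T_1 Q^{-1}=T_2$ (as subgroups of $\Sl_4(\RR)$, up to the Weyl-group permutations) and $Q$ carrying $\QQ^4$ to $\QQ^4$, hence $Q\in\Gl_4(\QQ)$ after rescaling.

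Next I would use the hypothesis that $\Gamma_1'=\Lambda_1'\ltimes L_1$ and $\Gamma_2'=\Lambda_2'\ltimes L_2$ are finite-index subgroups with $\Phi(\Gamma_1')=\Gamma_2'$. Restricting to nilradicals gives $Q(L_1)=L_2$ with $L_j\subset\QQ^4$ a lattice, so $Q\in\Gl_4(\QQ)$. Restricting to the torus part: $\Lambda_j'$ has finite index in $\Lambda_j$, which by construction is the group generated by (the restrictions to $T_j$ of) the integer matrices coming from three multiplicatively independent units of $\oca_{\QQ(u_j)}$; in particular $\Lambda_j'$ is Zariski dense in $\tilde T_j=\spann_\RR\{I,M_j,M_j^2,M_j^3\}\cap\Gl_4(\RR)$, so the equality $Q\Lambda_1'Q^{-1}=\Lambda_2'$ forces $Q\,\tilde T_1\,Q^{-1}=\tilde T_2$, hence $Q\,(\QQ[M_1])\,Q^{-1}=\QQ[M_2]$ as $\QQ$-subalgebras of $M_4(\QQ)$. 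Since $M_j$ is a regular semisimple element with minimal polynomial $p_j$, the $\QQ$-algebra $\QQ[M_j]\cong\QQ[t]/(p_j)=\QQ(u_j)$ (the field, because $p_j$ is irreducible). Therefore $Q$ conjugation gives a $\QQ$-algebra isomorphism $\QQ(u_1)\cong\QQ(u_2)$; in particular $p_1$ and $p_2$ generate the same field and a fortiori have the same splitting field. (One must be a little careful: $\Lambda_j'$ being finite index in $\Lambda_j$ means it still contains powers $A_i^{N}$ of the generators, and these powers correspond to units $\varepsilon_i^N$, still multiplicatively independent, hence still Zariski dense — so no density is lost.)

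The main obstacle I anticipate is the first step: pinning down $\Aut(G_J)$ precisely enough, in particular verifying that the torus-part of an automorphism is realized by conjugation in $\Gl_4(\RR)$ modulo the normalizer of $\ag$ (the Weyl group $S_4$), and tracking how the Weyl permutations interact with the identity-component conventions $T_j=(\tilde T_j)^0$. A clean way around this is to not insist on identity components: replace each $\Gamma_j$ by the finite-index sublattice whose torus part lies in $T_j^0$ is automatic, and observe that the permutation matrices lie in $\Gl_4(\QQ)$ anyway, so they do no harm to the $\QQ$-algebra conclusion — conjugating $\QQ[M_1]$ by a permutation still lands in a field $\QQ$-isomorphic to $\QQ(u_1)$. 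With that observation the argument is robust. I would also remark that this lemma shows the construction produces infinitely many pairwise non-commensurable lattices in $G_J$, since there are infinitely many totally real quartic fields with pairwise non-isomorphic splitting fields (e.g. with Galois group $S_4$ and distinct discriminants).
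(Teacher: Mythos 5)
Your proposal is correct in outline and coincides with the paper's proof for its entire first half: the isomorphism $\Phi$ must preserve the nilradical $\RR^4$, its restriction $\phi$ maps a finite-index subgroup of $\ZZ^4$ into $\ZZ^4$ and hence lies in $\Gl_4(\QQ)$, and the semidirect-product relation forces $T_2=\phi T_1\phi^{-1}$ (so your worries about pinning down $\Aut(G_J)$ and the Weyl group are unnecessary — the hypothesis already hands you the isomorphism $\Phi:G_{T_1}\to G_{T_2}$). Where you diverge is the endgame. The paper passes to the rational points $(T_j)_\QQ=T_j\cap\Sl_4(\QQ)$, which are $\QQ$-tori conjugate inside $\Gl_4(\QQ)$, and simply observes that conjugate $\QQ$-tori have isomorphic splitting fields, the splitting field of $(T_j)_\QQ$ being visibly that of $p_j$. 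You instead try to recover the full algebra $\QQ[M_j]\cong\QQ(u_j)$ from the lattice $\Lambda_j'$ via Zariski density; carried out, this yields the stronger conclusion $\QQ(u_1)\cong\QQ(u_2)$, which does imply the lemma since isomorphic number fields have isomorphic normal closures. The soft spot is the density claim as stated: $\Lambda_j'$ is \emph{not} Zariski dense in $\tilde T_j$, a four-dimensional torus containing the scalars, since the units all have norm $\pm 1$. What is true, and needs an argument you did not give, is that a finite-index subgroup of a rank-three group of multiplicatively independent units still has finite index in $\oca_{K_j}^\times$ (rank three by Dirichlet), so its logarithmic image spans the trace-zero hyperplane, the only monomial relations satisfied identically on it are powers of the norm, and the Zariski closure is therefore the three-dimensional norm-one subtorus; one must then further check that this subtorus determines $\QQ[M_j]$, e.g.\ as its centralizer or $\QQ$-linear span (a proper subfield of a quartic field has unit rank at most one, so cannot contain it). None of this is needed on the paper's route, which uses all rational points of the torus rather than only the unit lattice; your route buys the stronger field-isomorphism statement at the cost of this extra arithmetic.
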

\begin{proof}  Suppose that the images of $\Gamma_1$ and $\Gamma_2$ in $G_J$ are commensurable. By definition, there are finite index subgroups
$\Gamma_1'\subset\Gamma_1$, $\Gamma_2'\subset\Gamma_2$ and an isomorphism $\Phi: G_{T_1}\rightarrow G_{T_2}$ such that $\Phi(\Gamma_1')=\Gamma_2'$.

Since $\Phi: G_{T_1}\rightarrow G_{T_2}$ is an isomorphism, $\Phi$ maps $\RR^4$ to $\RR^4$, thus it is of the form
\begin{eqnarray*}
\Phi: G_{T_1}=T_1\ltimes \RR^4 & \longrightarrow &G_{T_2}=T_2\ltimes \RR^4\\
(A,0)&\longmapsto & (\phi_1(A),\phi_2(A))\\
(I_4,b)&\longmapsto &(I_A, \phi(b)). 
\end{eqnarray*}
Moreover, $\phi_1(A)=\phi A \phi^{-1}.$ In particular, we have $T_2=\phi T_1 \phi^{-1}.$

Since $\Phi$ maps $\Gamma_1'$ to $\Gamma_2'$, the restriction $\Phi|_{\RR^4}=\phi:\RR^4\rightarrow\RR^4$ maps the finite index subgroup $\Gamma_1'\cap \RR^4$ of $\ZZ^4$ to $\Gamma_2'\cap\RR^4\subset \ZZ^4$. Consequently, $\phi$ belongs to $\Gl_4(\QQ)$. We consider the tori $(T_j)_{\QQ}:=T_j\cap \Sl_4(\QQ)$, $j=1,2$. Since $\phi$ has rational entries, $(T_2)_\QQ=\phi (T_1)_\QQ \phi^{-1}$ holds. The splitting field of $(T_j)_{\QQ}$ is by definition the smallest field extension $L_j$ of $\QQ$ such that $(T_j)_{\QQ}$ splits over $L_j$. Since $(T_1)_{\QQ}$ and $(T_2)_{\QQ}$ are conjugate in $\Gl_4(\QQ)$, their splitting fields must be isomorphic. 
Let $M_j\in \Gl_4(\QQ)$ denote the linear maps corresponding to $u_j$, $j=1,2$.  Since $$(T_j)_\QQ=T_j\cap\,\spann_\QQ\{I_4,M_j, M_j^2, M_j^3\}$$ and the eigenvalues of $M_j$ are the roots of the minimal polynomial $p_j\in\QQ[t]$ of $u_j$, the splitting field $L_j$ of $(T_j)_\QQ$ equals the splitting field of $p_j$. Thus the splitting fields of $p_1$ and $p_2$ are isomorphic.
 \end{proof}

\begin{example} \label{ex1}
We now explain how the example given in Section \ref{latt-sec} arises from the construction explained above. We consider the irreducible polynomial $p(t):=t^4-t^3-4t^2+4t+1\in\QQ[t]$, whose roots are the numbers
$u_1, u_2, u_3, u_4$ that were defined by~\eqref{u1234}.
Then $u_2=u_1^2-2$, $u_3=u_2^2-2$, $u_4=(u_1 u_2 u_3)^{-1}$. In particular, all roots belong to $K:=\QQ(u_1)$. Since $u_1,u_2,u_3,u_4$ are units in $\oca_K$, they act as commuting automorphisms on $\oca_K$ . The units $u_1,u_2,u_3$ are multiplicatively independent, see below. Since we want to construct automorphisms with positive eigenvalues, we choose the units $u_1^2, u_2^2, u_3^2$, which of course are also independent.  The matrices $A_1, A_2, A_3$ in \eqref{matr} are exactly the matrices corresponding to multiplication by  these units with respect to the integral basis $1, u_1, u_1^2, u_1^3$.   They can easily be computed using that the multiplication by $u_1$ is given by the companion matrix of $p$, see \eqref{comp}, and using the identities $u_2=u_1^2-2$ and $u_3=u_2^2-2$.
Let us denote the matrices corresponding to $u_1$, $u_2$ and $u_3$ by $B_1,B_2$ and $B_3$, respectively. The eigenvalues of each $B_j$, $j=1,2,3$, are $u_1,\dots,u_4$. Indeed, by construction $p(B_j)=0$, thus the minimal polynomial of $B_j$ divides $p$. Since $p$ is irreducible over $\QQ$, this implies that $p$ is the characteristic polynomial of $B_j$. This shows that the spectrum of each $A_j$, $j=1,2,3$ equals $\{u_1^2,\dots,  u_4^2\}$ as claimed above. The matrix $B_1$ is the companion matrix of $p$, thus it is diagonalised by the Vandermonde matrix $\phi=V(u_1,u_2,u_3,u_4)$. Since $A_1, A_2$ and $A_3$ are polynomials in $B_1$, they can all be simultaneously diagonalised by $\phi$.

We remark that $\QQ(u_1)$ is the splitting field of $p$. The Galois group of $p$ is cyclic of order four and generated by a map that acts on the roots by $u_1\mapsto u_2$, $u_2\mapsto u_3$, $u_3\mapsto u_4$, $u_4\mapsto u_1$. 

\noindent {\it Proof of independency.} We want to show that the units $u_1,u_2$ and $u_3$ are indeed multiplicatively independent. Assume that $u_1^ku_2^lu_3^m=1$ for $k,l,m\in\ZZ$. Using the action of the Galois group and the identity $u_1u_2u_3u_4=1$, we obtain the following system of equations
$$u_1^ku_2^lu_3^m=1, \quad u_1^{-m}u_2^{k-m}u_3^{l-m}=1,\quad u_1^{m-l}u_2^{-l}u_3^{k-l}=1.$$
The first and the third equation give $(u_1u_3)^{k+m-l}=1$, thus $l=k+m$. Inserting this into the first two equations, this yields
$$(u_1u_2)^k (u_2u_3)^m=1,\quad (u_1u_2)^{-m}(u_2u_3)^k=1  .$$
Combining these equations, we obtain $(u_1u_2)^{k^2+m^2}=1$, which gives $k=m=0$, thus also $l=0$.
\qed
\end{example}

In what follows, by also applying the above technique, we give a new example of a lattice of the solvable Lie group $G_J=\exp(\ag)\ltimes\RR^4$ considered in Section \ref{latt-sec}, which is not commensurable to Example \ref{ex1}.  

\begin{example}\label{ex2}
The matrices
\begin{equation}\label{matr}
A_1:=\left[\begin{smallmatrix} 
0&0&-1&0\\ 
0&0&0&-1\\
1&0&4&0\\
0&1&0&4
\end{smallmatrix}\right], \qquad 
A_2:=\left[\begin{smallmatrix} 
1&0&-4&-4\\ 
4&1&0&-4\\
4&4&17&16\\
0&4&4&17
\end{smallmatrix}\right], \qquad
A_3:=\left[\begin{smallmatrix} 
-5&-10&-20&-38\\ 
-2&-5&-10&-20\\
20&38&75&142\\
10&20&38&75
\end{smallmatrix}\right]
\end{equation}
have determinant one. We put 
\begin{equation}\label{ex2u}
u_1:=u:=\sqrt{2+\sqrt{3}}, \quad u_2=u^{-1}=\sqrt{2-\sqrt{3}}, \quad u_3=-u,\quad u_4=-u^{-1}.
\end{equation}
The Vandermonde matrix $\phi:=V(u_1,u_2,u_3,u_4)$ simultaneously diagonalises $A_1, A_2$ and $A_3$. More exactly,
$$
\phi A_1\phi^{-1}=M^2,\quad
\phi A_2\phi^{-1}=(1+2M)^2, \quad
\phi A_2\phi^{-1}= (1+M-2M^2-M^3)^2
$$
for $M:=\Diag(u_1,u_2,u_3,u_4).$
The linear maps $\phi A_j\phi^{-1}$, $j=1,2,3$, generate a lattice $\Lambda$ of $\exp(\ag)$, see below. Moreover, they leave invariant the subset $\phi(\ZZ^4)\subset\RR^4$.
Thus
$
\Gamma:=\Lambda\ltimes\phi(\ZZ^4)
$ 
is a lattice in $G_J$. 

The numbers $u_1,\dots,u_4$ defined in \eqref{ex2u} are the roots of $p(t)=t^4-4t^2+1$. We choose the units $(u)^2, (1+2u)^2$ and $(1+u-2u^2-u^3)^2$. These are multiplicatively independent, see below. The matrices $A_1,A_2, A_3$ correspond to the multiplication by these units  with respect to the integral basis $1, u,u^2,u^3$. Consequently, as in the previous example, they can all expressed as polynomials in the companion matrix of $p$. Thus they  can be simultaneously diagonalised by the Vandermonde matrix $\phi$.

Here, $\QQ(u)$ is the splitting field of $p$. Its Galois group of $p$ is isomorphic to the Klein four-group. It is generated by the map $\sigma_1$ that maps $u$ to $u^{-1}$ and the map $\sigma_2$ that maps $u$ to $-u$.

\noindent {\it Proof of independency.} We prove that the units $u, 1+2u$ and $1+u-2u^2-u^3$ are multiplicatively independent, thus also their squares are independent. Assume that 
\begin{equation}\label{E1}
u^k(1+2u)^l (1+u-2u^2-u^3)^m=1
\end{equation}
for some $k,l,m\in\ZZ$. Applying the elements $\sigma_1$ and $\sigma_2$ of the Galois group, we obtain
\begin{eqnarray}
&u^{-k}(1+2u^{-1})^l (1+u^{-1}-2u^{-2}-u^{-3})^m=1,& \label{E2}\\
&(-u)^k(1-2u)^l (1-u-2u^2+u^3)^m=1.&\label{E3}
\end{eqnarray}
Since $(1+u^{-1}-2u^{-2}-u^{-3})(1-u-2u^2+u^3)=-1$, the multiplication of the identities \eqref{E2} and \eqref{E3} gives
$$(-1)^{k+m}(1+2u^{-1})^l(1-2u)^l=(-1)^{k+m}(-3+6u-2u^3)^l =1,$$
hence $l=0$. Now we multiply the identities \eqref{E2} and \eqref{E3}, which gives
$$(1+u-2u^2-u^3)^m(1+u^{-1}-2u^{-2}-u^{-3})^m= (-5-10u+2u^3)^m=1.$$
This implies $m=0$, thus also $k=0$.
\qed
\end{example}

\begin{remark}
The lattices that we constructed in Example~\ref{ex1} and in Example~\ref{ex2} are not commensurable. This follows from Lemma \ref{comm} and the fact that the Galois groups of the used polynomials are not isomorphic as we have seen above. 
\end{remark}

\section{$G_2$-geometry on the group $G_J$}\label{geom}

We study in this section a large family of $G_2$-structures on the Lie group $G_J=\exp(\ag)\ltimes\RR^4$ considered in the above sections.  Recall that any left-invariant $G_2$-structure on $G_J$ also defines a $G_2$-structure on the compact $7$-manifold $M=G_J/\Gamma$ which is locally homogeneuos, where $\Gamma$ is any of the lattices of $G_J$ exhibited in Sections \ref{latt-sec} and \ref{ant}.  

A differential $3$-form $\vp$ on a $7$-dimensional differentiable manifold $M$ is called a $G_2$-{\it structure} when it is {\it positive} (or definite), in the sense that $\vp$ naturally determines a Riemannian metric $g$ on $M$ and an orientation by 
$$
g(X,Y)\vol = \frac{1}{6} \iota_X(\vp)\wedge \iota_Y(\vp)\wedge\vp, \qquad\forall X,Y\in\mathfrak{X}(M).
$$
The positivity of $\vp$ is equivalent to the fact that at each point $p\in M$, $\vp_p$ can be written as 
\begin{equation}\label{phi}
\vp_p=e^{127}+e^{347}+e^{567}+e^{135}-e^{146}-e^{236}-e^{245},
\end{equation}
with respect to some basis $\{ e_1,\dots,e_7\}$ of $T_pM$.

We start by recalling that the {\it torsion forms} of a $G_2$-structure $\vp$ on a manifold $M$ are the components of the {\it intrinsic torsion} $\nabla\vp$, where $\nabla$ is the Levi-Civita connection of the metric $g$ attached to $\vp$.  If we set 
$$
\psi:=\ast\vp,
$$ 
then they are defined as the unique differential forms $\tau_i\in\Omega^iM$, $i=0,1,2,3$, such that
\begin{equation}\label{torsion}
d\vp=\tau_0\psi+3\tau_1\wedge\vp+\ast\tau_3, \qquad d\psi=4\tau_1\wedge\psi+\tau_2\wedge\vp,
\end{equation}
and they are given by (see e.g., \cite[(4)]{MnrOtlVll}),
\begin{equation}\label{torsion2}
\begin{array}{c}
\tau_0 = \frac{1}{7}\ast(d\vp\wedge\vp), \qquad \tau_1 =  -\frac{1}{12}\ast(\ast d\vp\wedge\vp), \\  \\
\tau_2= -\ast d\psi+4\ast(\tau_1\wedge\psi), \qquad \tau_3 = \ast d\vp-\tau_0\vp-3\ast(\tau_1\wedge\vp).  
\end{array}
\end{equation}

Let $\ggo_{A,B,C}$ be the solvable Lie algebra with basis $\{ e_1,\dots,e_7\}$ such that $\ag:=\la e_7,e_1,e_2\ra$ is abelian, $\ngo:=\la e_3,e_4,e_5,e_6\ra$ is an abelian ideal and in terms of the basis $\{ e_3,\dots,e_6\}$, 
$$
\ad{e_7}|_\ngo=A, \quad
\ad{e_1}|_\ngo=B, \quad
\ad{e_2}|_\ngo=C, \qquad A,B,C\in\slg_4(\RR).  
$$
Note that the Jacobi condition holds if and only if $A,B,C$ pairwise commute.  Thus the corresponding simply connected solvable Lie group $G_{A,B,C}$ is isomorphic to our group $G_J$ if and only if $\{ A,B,C\}$ is linearly independent and simultaneously diagonalizes over $\RR$, that is, $\la A,B,C\ra$ is a maximal $\RR$-split torus of $\slg_4(\RR)$.  Such triples of matrices will be called {\it compatible}.  

We consider the left-invariant $G_2$-structure defined on each $G_{A,B,C}$ by the positive $3$-form $\vp$ given in \eqref{phi} (the basis $\{ e_1,\dots,e_7\}$ is orthonormal and oriented with respect to the inner product $\ip$ determined by $\vp$).  Each triple of commuting matrices is therefore identified with the $G_2$-structure $(G_{A,B,C},\vp)$, which in the compatible case is (equivariantly) equivalent to the left-invariant $G_2$-structure on $G_J$ defined by $h^*\vp$, where $h:G_J\rightarrow G_{A,B,C}$ is any Lie group isomorphism (see Section \ref{equiv0} below).  In this way, a large family of $G_2$-structures on $G_J$ is what one is really exploring by varying all compatible triples.  This point of view is often called the moving-bracket approach (see e.g., \cite{LF,sol-HS}).

\subsection{Equivalence}\label{equiv0}
Two $G_2$-structures $(M,\vp)$ and $(M',\vp')$ are said to be {\it equivalent} if there is a diffeomorphism $f:M\rightarrow M'$ such that $\vp=f^*\vp'$, and in the case when $M,M'$ are Lie groups and the structures are left-invariant, they are called {\it equivariantly equivalent} if in addition $f$ is a Lie group isomorphism.  Let us recall two key results concerning equivalence of left-invariant metrics.  

\begin{theorem}\label{equiv-cs} \cite[Theorem 1]{Alk} \cite[Theorem 5.2]{GrdWls}
If two simply connected completely solvable Lie groups endowed with left-invariant Riemannian metrics are isometric, then there exists an isomorphism between the Lie groups which is an isometry (i.e., they are equivariantly isometric).  
\end{theorem}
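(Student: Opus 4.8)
This statement is classical and is quoted here from \cite{Alk,GrdWls}; I outline the standard proof, which rests on the structure theory of isometry groups of solvmanifolds. The plan is to reduce the assertion to a statement about isometry groups. Let $f\colon G_1\to G_2$ be an isometry between the given simply connected completely solvable Lie groups with left-invariant metrics, and denote by $e_i$ the identity of $G_i$. Composing $f$ with a left translation of $G_2$, we may assume $f(e_1)=e_2$, and it then suffices to show that this normalized $f$ is a Lie group isomorphism. Conjugation $\Phi\mapsto f\Phi f^{-1}$ is an isomorphism $\Iso(G_1)\to\Iso(G_2)$ of full isometry groups which carries the identity component $\Iso^0(G_1)$ onto $\Iso^0(G_2)$ and the isotropy subgroup at $e_1$ onto the isotropy subgroup at $e_2$. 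Viewing each $G_i$ inside $\Iso^0(G_i)$ via left translations $x\mapsto L_x$, the crux is to prove that conjugation by $f$ maps $G_1$ onto $G_2$.

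Granting this, for each $x\in G_1$ there is a unique $g(x)\in G_2$ with $f\circ L_x\circ f^{-1}=L_{g(x)}$; the assignment $x\mapsto g(x)$ is then a Lie group isomorphism $G_1\to G_2$, and evaluating the identity $f\circ L_x=L_{g(x)}\circ f$ at $e_1$ gives $f(x)=L_{g(x)}(f(e_1))=L_{g(x)}(e_2)=g(x)$, so $f=g$ is the desired equivariant isometry. Note that we do not need the original $f$ to be an isomorphism: it is enough to exhibit \emph{some} isometry which is an isomorphism, and the normalized map does the job.

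To establish the crux one invokes the Gordon--Wilson structure theorem: for a simply connected solvable Lie group $G$ with left-invariant metric, $\Iso^0(G)$ decomposes as a semidirect product $G_c\rtimes K$, where $K$ is the (compact, connected) isotropy subgroup at $e$ and $G_c$ is a canonical simply transitive solvable normal subgroup, intrinsically attached to $\Iso^0(G)$ together with a base point. Since conjugation by $f$ is an isomorphism $\Iso^0(G_1)\to\Iso^0(G_2)$ respecting the isotropy at the chosen base points, it must carry $G_{1,c}$ onto $G_{2,c}$. The precise point where the hypothesis of complete solvability enters is that the passage from $G$ to $G_c$ (the ``nilshadow modification'' of the left-invariant metric) is trivial in this class: a simply connected completely solvable subgroup of $\Iso^0(G)$ acting simply transitively coincides with the canonical one, whence $G_{i,c}=G_i$, and the crux follows.

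The main obstacle is precisely this structural input --- the triviality of the modification in the completely solvable case --- which is the content of \cite[Theorem 5.2]{GrdWls} and \cite[Theorem 1]{Alk}. Its proof exploits the rigidity forced by the reality of the spectra of all operators $\ad X$ on $\ggo_i$, which rules out the ``rotational'' modifications occurring for non-completely-solvable examples such as the solvable presentation of the hyperbolic plane. The remaining ingredients --- the reduction to isometry groups and the bookkeeping with $g$ --- are routine.
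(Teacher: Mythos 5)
The paper offers no proof of this statement: it is imported verbatim from \cite{Alk} and \cite{GrdWls}, so there is no in-paper argument to measure your outline against. Your sketch does follow the standard route (normalize $f$ so that $f(e_1)=e_2$, conjugate the isometry groups, and invoke the Gordon--Wilson structure theory of simply transitive solvable subgroups of $\Iso^0$), and as a road map to the cited results it is essentially faithful.

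There is, however, one overstated step that would fail as written. You claim that a simply connected completely solvable subgroup of $\Iso^0(G)$ acting simply transitively \emph{coincides} with the canonical one, and hence that the normalized $f$ itself is an isomorphism. What the completely solvable hypothesis actually buys (this is the content of \cite[Theorem 5.2]{GrdWls}) is uniqueness of such a subgroup only up to conjugation by the isotropy subgroup $K$ at the base point, not equality. Real hyperbolic space already shows the difference: writing $H^n$ as a completely solvable group $S=\RR\ltimes\RR^{n-1}$ with left-invariant metric, the isotropy at $e$ is $\mathrm{O}(n)$ while the isometries fixing $e$ that are automorphisms of $S$ form only the normalizer of $S$ in $\mathrm{O}(n)$; conjugating $S$ by a generic rotation produces a \emph{different} completely solvable simply transitive subgroup, and the corresponding normalized isometry is not an isomorphism. (This is also why the paper needs the extra unimodularity hypothesis in Theorem \ref{equiv-csu} to conclude that \emph{every} isometry fixing $e$ is an automorphism.) The repair is routine and does not affect the statement, which only asserts existence: $fG_1f^{-1}$ is a completely solvable simply transitive subgroup of $\Iso^0(G_2)$, hence equals $kG_2k^{-1}$ for some $k$ in the isotropy at $e_2$, and then $k^{-1}\circ f$ (rather than $f$ itself) is the desired isometric isomorphism.
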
 

\begin{theorem}\label{equiv-csu} \cite[Theorem 4.3]{GrdWls} 
Any isometry fixing the identity element of a simply connected completely solvable and unimodular Lie group endowed with a left-invariant Riemannian metric is an automorphism of the Lie group.  
\end{theorem}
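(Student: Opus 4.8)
\medskip\noindent\textit{Proof strategy.}
The plan is to reduce the statement to the normality of the left-translation subgroup $\Le(G)\subset\Iso(G,g)$ and then to extract that normality from the structure theory of isometry groups of solvmanifolds, which is where complete solvability and unimodularity are used. The reduction is elementary: suppose $\Le(G)\trianglelefteq\Iso(G,g)$ and let $\tau\in\Iso(G,g)$ fix the identity $e$. For each $x\in G$ the isometry $\tau\circ\Le_x\circ\tau^{-1}$ lies in $\Le(G)$, say $\tau\circ\Le_x\circ\tau^{-1}=\Le_y$; evaluating at $e$ and using $\tau^{-1}(e)=e$ forces $y=\Le_y(e)=\tau(\Le_x(e))=\tau(x)$. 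Hence $\tau\circ\Le_x\circ\tau^{-1}=\Le_{\tau(x)}$ for every $x$, and applying both sides to an arbitrary $z\in G$ yields $\tau(xz)=\tau(x)\tau(z)$; thus $\tau$ is a Lie group homomorphism and, being a diffeomorphism, an automorphism.

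Equivalently, realising $\ggo$ via the infinitesimal left translations as a subalgebra of the Lie algebra $\mathfrak i$ of $\Iso(G,g)$, with $\mathfrak i=\ggo\oplus\hg$ as a vector space ($\hg$ the isotropy subalgebra at $e$, compactly embedded since the isotropy group is compact), the point to prove is that $\ggo$ is an ideal of $\mathfrak i$: then $\ggo$ is $\Ad$-invariant under all of $\Iso(G,g)$, in particular under every $\tau$ fixing $e$, and one concludes as above. In this form the content is that the isotropy acts on $\ggo$ by skew-symmetric derivations. I expect this step -- proving $\ggo\trianglelefteq\mathfrak i$ -- to be the main obstacle, and rather than reproving it I would appeal to the Gordon--Wilson analysis of $\mathfrak i$ for a solvmanifold, carried out via its nilradical and Levi decomposition together with the compact embedding of $\hg$. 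Both hypotheses matter there: complete solvability of $\ggo$ (all eigenvalues of $\ad X$ real) rules out the passage to a ``modified'' simply transitive solvable subgroup of $\Iso(G,g)$ distinct from $\Le(G)$, while unimodularity of $\ggo$ -- equivalently the vanishing of the mean-curvature vector $H\in\ggo$ fixed by $\la H,X\ra=\tr(\ad X)$ -- removes the residual isotropy contribution obstructing ideal-ness; without unimodularity the statement is false, as the hyperbolic plane $\RR\ltimes\RR$ shows. One obtains that $\Le(G)$ is normal, indeed characteristic, in $\Iso(G,g)^0$, and since conjugation by any isometry is an automorphism of $\Iso(G,g)^0$, this gives normality of $\Le(G)$ in the full group $\Iso(G,g)$ and so also handles isometries in non-identity components. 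References for this structural input are \cite{GrdWls} and \cite{Alk}.

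In short, the argument splits into a short group-theoretic reduction, its reformulation as ``$\ggo$ is an ideal of the isometry algebra'', and the Gordon--Wilson structure theorem for that algebra; the first two are routine once the third is granted, and the third -- which I would cite rather than reconstruct -- is the genuine difficulty.
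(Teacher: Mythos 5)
The paper gives no proof of this statement at all---it is quoted verbatim from \cite[Theorem 4.3]{GrdWls}---and your proposal likewise defers the essential content (normality of $\Le(G)$ in $\Iso(G,g)$, equivalently that the isotropy subgroup at $e$ lies in $\Aut(G)$, which is where unimodularity and complete solvability enter) to that same reference, so the two approaches coincide. Your elementary reduction from normality of $\Le(G)$ to the conclusion that an isometry fixing $e$ is an automorphism is correct and standard.
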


Since any equivalence between $G_2$-structures is also an isometry between the corresponding attached metrics, it follows from Theorem \ref{equiv-cs} that two equivalent simply connected completely solvable Lie groups endowed with left-invariant $G_2$-structures $(G,\vp)$ and $(G',\vp')$ must be isomorphic.  Moreover, if the groups $G$ and $G'$ are in addition unimodular, then two equivalent $(G,\vp)$ and $(G',\vp')$ are automatically equivariantly equivalent by Theorem \ref{equiv-csu}.  Indeed, the equivalence $f:(G,\vp)\rightarrow (G',\vp')$ can be assumed to satisfy $f(e)=e'$ by left-invariance and if $h:G\rightarrow G'$ is the Lie group isomorphism provided by Theorem \ref{equiv-cs}, then the isometry $fh^{-1}$ of $(G',\ip')$ is necessarily an automorphism of $G'$ by Theorem \ref{equiv-csu}, from which it follows that $f:G\rightarrow G'$ is an isomorphism.         

In other words, the two notions of equivalence and equivariant equivalence coincide among the class of left-invariant $G_2$-structures on unimodular completely solvable Lie groups.  

Consider the $14$-dimensional compact simple Lie group 
$$
G_2:=\{ h\in\Gl_7(\RR):h^*\vp=\vp\}\subset\SO(7).  
$$
Any Lie group isomorphism $f:G\rightarrow G'$ such that, after identifying the underlying vector spaces of the respective Lie algebras, $h:=df|_e\in G_2$, therefore determines an equivalence between the corresponding left-invariant $G_2$-structures $(G,\vp)$ and $G',\vp)$. 

Since $G_J$ is completely solvable and unimodular, it follows from the results in \cite{Alk, GrdWls} described above that two compatible $(G_{A,B,C},\vp)$ and $(G_{A',B',C'},\vp)$ are equivalent if and only if there exists an isomorphism $h:\ggo_{A,B,C}\rightarrow\ggo_{A',B',C'}$ such that $h\in G_2$.  Being $\ngo$ the nilradical of both Lie algebras, we have that $h(\ngo)=\ngo$ and so $h(\ag)=\ag$ as well.  It is well known that $h_2:=h|_{\ngo}$ has determinant one and that actually for any $h_2\in\SO(4)$ there exists $h_1\in\SO(3)$ such that 
$$
h:=\left[\begin{matrix} h_1&0\\ 0&h_2 \end{matrix}\right]\in G_2.  
$$
(See e.g., \cite{VnLMnr}).  Thus two compatible $(G_{A,B,C},\vp)$ and $(G_{A',B',C'},\vp)$ are equivalent if and only if there is an $h_2\in\SO(4)$ such that 
\begin{align} 
A'&=x_{11}h_2Ah_2^{-1}+x_{21}h_2Bh_2^{-1}+x_{31}h_2Ch_2^{-1},\notag \\ 
B'&=x_{12}h_2Ah_2^{-1}+x_{22}h_2Bh_2^{-1}+x_{32}h_2Ch_2^{-1}, \qquad h_1^{-1}=[x_{ij}], \label{equiv1} \\
C'&=x_{13}h_2Ah_2^{-1}+x_{23}h_2Bh_2^{-1}+x_{33}h_2Ch_2^{-1} \notag
\end{align}
  
In this section, we will often use the homothety invariant for non-flat homogeneous metrics defined by 
\begin{equation}\label{RP}
F(g)= \frac{\scalar_g^2}{\tr{\Ricci_g^2}},
\end{equation}
to show that a given explicit one-parameter family of $G_2$-structures is pairwise non-homothetic.  Note that $F(g)\leq 7$ and equality holds if and only if $g$ is Einstein, hence $F$ measures in some sense how far is the metric $g$ from being Einstein (see \cite{Fields} for a study of the behavior of $F$ on homogeneous closed $G_2$-structures).

\subsection{Formulas for the torsion and the Laplacians}
In \cite{Ncl, Ncl-T}, a much larger class of Lie groups endowed with $G_2$-structures is studied, including computations of the torsion forms and the value of the Laplacian at $\vp$ and $\psi$.  In order to use such formulas, we introduce the following notation.  

We write the positive $3$-form given in \eqref{phi} as  
\begin{align}
\vp=\omega_7\wedge e^7+\omega_1\wedge e^1+\omega_2\wedge e^2+e^{127}, \label{phi2}
\end{align}
where 
$$
\omega_7:=e^{34}+e^{56}, \qquad \omega_1:=e^{35}-e^{46}, \qquad \omega_2:=-e^{36}-e^{45},
$$
and so 
\begin{align}
\psi:=\ast\vp=\omega_7\wedge e^{12}+\omega_1\wedge e^{27}-\omega_2\wedge e^{17}+e^{3456}. \label{sphi}
\end{align}
Let $\theta$ denote the natural representation of $\slg(\ngo)$ on $\Lambda^2\ngo^*$.  

\begin{proposition}\label{formulas}\cite{Ncl, Ncl-T}
For each $G_2$-structure $(G_{A,B,C},\vp)$, the following formulas hold:
\begin{align}
d\vp =& (\theta(B)\omega_7-\theta(A)\omega_1)\wedge e^{17} + (\theta(C)\omega_7-\theta(A)\omega_2)\wedge e^{27}  \label{dphi} \\ 
&+ (\theta(B)\omega_2-\theta(C)\omega_1)\wedge e^{12}, \notag\\ 
\ast d\vp=&(\theta(B^t)\omega_7-\theta(A^t)\omega_1)\wedge e^2-(\theta(C^t)\omega_7-\theta(A^t)\omega_2)\wedge e^1 \label{sdphi}\\
    &-(\theta(B^t)\omega_2-\theta(C^t)\omega_1)\wedge e^7, \notag \\
\ast d\ast d\vp=&\left(\theta(B^t)(\theta(B)\omega_7-\theta(A)\omega_1)+\theta(C^t)(\theta(C)\omega_7-\theta(A)\omega_2)\right)\wedge e^{7} \label{sdsdphi} \\
           &\left(\theta(B^t)(\theta(B)\omega_2-\theta(C)\omega_1)-\theta(A^t)(\theta(C)\omega_7-\theta(A)\omega_2)\right)\wedge e^{2} \notag \\
           &\left(-\theta(C^t)(\theta(B)\omega_2-\theta(C)\omega_1)-\theta(A^t)(\theta(B)\omega_7-\theta(A)\omega_1)\right)\wedge e^{1}, \notag \\
d\psi =& \left(\theta(A)\omega_7+\theta(B)\omega_1+\theta(C)\omega_2\right)\wedge e^{127}, \label{dsphi} \\
\ast d \psi =& -\left(\theta(A^t)\omega_7+\theta(B^t)\omega_1+\theta(C^t)\omega_2\right),  \label{sdsphi} \\ 
d\ast d \psi =& -\left(\theta(A)\theta(A^t)\omega_7+\theta(A)\theta(B^t)\omega_1+\theta(A)\theta(C^t)\omega_2\right)\wedge e^7 \label{dsdsphi} \\ 
& -\left(\theta(B)\theta(A^t)\omega_7+\theta(B)\theta(B^t)\omega_1+\theta(B)\theta(C^t)\omega_2\right)\wedge e^1\notag\\ 
& -\left(\theta(C)\theta(A^t)\omega_7+\theta(C)\theta(B^t)\omega_1+\theta(C)\theta(C^t)\omega_2\right)\wedge e^2.\notag
\end{align}
\end{proposition}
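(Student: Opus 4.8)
The plan is to derive all eight formulas by a direct but organized exterior-calculus computation, exploiting the semidirect-product structure of $\ggo_{A,B,C}=\ag\ltimes\ngo$ with $\ag=\la e_7,e_1,e_2\ra$ abelian and $\ngo=\la e_3,\dots,e_6\ra$ an abelian ideal. First I would write down the Chevalley--Eilenberg differential on $\ggo_{A,B,C}^*$: since $\ngo$ and $\ag$ are abelian one has $de^i=0$ for $i\in\{1,2,7\}$, while for $i\in\{3,4,5,6\}$ the only contributions to $de^i$ come from the brackets $[e_7,e_j]=Ae_j$, $[e_1,e_j]=Be_j$, $[e_2,e_j]=Ce_j$ with $e_j\in\ngo$; concretely $de^i=-A^t(e^i)\wedge e^7-B^t(e^i)\wedge e^1-C^t(e^i)\wedge e^2$ where $A^t,B^t,C^t$ act on $\ngo^*$. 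The key bookkeeping device is the representation $\theta$ of $\slg(\ngo)$ on $\Lambda^2\ngo^*$: for any $X\in\slg(\ngo)$ and any $\alpha\in\Lambda^2\ngo^*$, one checks $d(\alpha\wedge e^7)$ picks up exactly $-\theta(A^t)\alpha\wedge e^7\wedge e^7=0$ from the $e^7$-direction but a genuine term from differentiating $\alpha$ against $e^1,e^2$, and similarly for the other wedges. The cleanest route is to prove once and for all the ``master identity'' $d(\alpha\wedge e^{i})=(d\alpha)\wedge e^i$ for $i\in\{1,2,7\}$ together with $d\alpha=-\theta(A^t)\alpha\wedge e^7-\theta(B^t)\alpha\wedge e^1-\theta(C^t)\alpha\wedge e^2$ for $\alpha\in\Lambda^2\ngo^*$, where I am using that $\theta(X)$ on $\Lambda^2\ngo^*$ is the derivation extension of $-X^t$ on $\ngo^*$. (One must be careful with the placement of transposes and signs; I would pin these down by testing on $\omega_7=e^{34}+e^{56}$.)

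With the master identity in hand, formula \eqref{dphi} follows by applying $d$ to the decomposition \eqref{phi2} $\vp=\omega_7\wedge e^7+\omega_1\wedge e^1+\omega_2\wedge e^2+e^{127}$: the last term is closed, and each of the first three produces three wedge terms, which I then collect by the covectors $e^{17}$, $e^{27}$, $e^{12}$; the claimed cancellations (e.g. the self-wedge $\theta(A^t)\omega_7\wedge e^7\wedge e^7=0$) leave precisely the stated expression, with the convention-dependent signs absorbed into whether $\theta(A)$ or $\theta(A^t)$ appears. For \eqref{sdphi} I apply the Hodge star of the orthonormal oriented coframe $\{e^1,\dots,e^7\}$ to \eqref{dphi}, using $\ast(\beta\wedge e^{17})=-(\ast_\ngo\beta)\wedge e^2$ type identities (here $\ast_\ngo$ is the $4$-dimensional star on $\ngo$, under which $\omega_7,\omega_1,\omega_2$ are a standard self-dual/anti-self-dual triple, so $\ast_\ngo$ acts on them in a controlled way, turning $\theta(X)$ into $\theta(X^t)$ — this is the real source of the transposes in \eqref{sdphi}). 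Formula \eqref{sdsdphi} is then one more application of the master identity and the star to \eqref{sdphi}, purely mechanically. The coclosed side is easier: \eqref{sphi} gives $\psi=\omega_7\wedge e^{12}+\omega_1\wedge e^{27}-\omega_2\wedge e^{17}+e^{3456}$; differentiating, $e^{3456}$ contributes (its $d$ is a sum of $5$-forms that reassemble into the $\theta$-terms) and the three mixed terms give the rest, collapsing to the single clean expression \eqref{dsphi}; then \eqref{sdsphi} is $\ast$ of \eqref{dsphi}, and \eqref{dsdsphi} is $d$ of \eqref{sdsphi} via the master identity once more.

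The main obstacle is not conceptual but a matter of sign and transpose discipline: there are four independent places where a convention can flip a sign — the Chevalley--Eilenberg sign in $de^i$, the derivation sign defining $\theta$, the orientation/ordering sign in each Hodge star, and the behavior of $\ast_\ngo$ on the basis $\{\omega_7,\omega_1,\omega_2\}$ of $\Lambda^2_\pm\ngo^*$ — and these must be made mutually consistent so that the final eight formulas match \cite{Ncl,Ncl-T} verbatim. My strategy to control this is to fix all conventions by evaluating every formula on one nontrivial explicit example, e.g. $A=\diag(1,-1,1,-1)$, $B=C=0$ (or the ERP triple from Section~\ref{latt-sec}), and check that both sides agree numerically; once the signs are locked the general derivation is routine. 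Strictly speaking, since the proposition is quoted from \cite{Ncl,Ncl-T}, one could also simply verify that the $G_2$-structures $(G_{A,B,C},\vp)$ fall under the hypotheses of those references (Lie group with an abelian nilradical carrying the standard $\vp$) and specialize their general formulas; I would present the self-contained derivation as the primary argument and remark on the citation as a cross-check.
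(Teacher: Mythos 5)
The paper offers no proof of Proposition \ref{formulas} at all --- it is quoted from \cite{Ncl, Ncl-T} --- so your self-contained Chevalley--Eilenberg derivation is necessarily a different, and more informative, route; its outline is correct and is surely what the cited references do in greater generality. The right ingredients are all present: the decompositions \eqref{phi2} and \eqref{sphi}, the vanishing $de^i=0$ for $i\in\{1,2,7\}$ (so that $d(\alpha\wedge e^i)=(d\alpha)\wedge e^i$ for $2$-forms $\alpha$), the reduction of everything to a single identity $d\alpha=\theta(A)\alpha\wedge e^7+\theta(B)\alpha\wedge e^1+\theta(C)\alpha\wedge e^2$ for $\alpha\in\Lambda^2\ngo^*$, and the identification of the four-dimensional Hodge star on $\Lambda^2_{\pm}\ngo^*$ as the source of the transposes. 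Two small corrections to the details. First, your master identity as written, $d\alpha=-\theta(A^t)\alpha\wedge e^7-\cdots$, carries both a spurious sign and a spurious transpose relative to the paper's normalization of $\theta$ (compare \eqref{tita-d} with the coefficients in \eqref{dphi-d}); you anticipate exactly this and your plan to lock conventions by testing on $\omega_7$ does resolve it, but the correct statement is the transpose-free one above, and the transposes in \eqref{sdphi} then arise from $\ast_4\,\theta(X)\omega_i=-\theta(X^t)\omega_i$ (since $\theta$ of a skew matrix preserves $\Lambda^2_{\pm}\ngo^*$ while $\theta$ of a traceless symmetric matrix swaps them) combined with identities such as $\ast(\beta\wedge e^{17})=-(\ast_4\beta)\wedge e^2$. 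Second, $d(e^{3456})$ does not ``reassemble into the $\theta$-terms'': it vanishes identically because $A,B,C$ are traceless (the induced action on $\Lambda^4\ngo^*$ is multiplication by $-\tr$), and all of \eqref{dsphi} comes from the three mixed terms of \eqref{sphi}. Neither point affects the viability of the argument; with the conventions fixed as you propose, the computation goes through and reproduces all eight formulas.
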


\begin{remark}
These formulas are valid for any triple of commuting traceless matrices $A,B,C$ beyond compatibilty, which give rise to $G_2$-structures on many other Lie groups $G_{A,B,C}$ not isomorphic to $G_J$.  
\end{remark}

It follows from \cite[(25)]{solvsolitons} that the Ricci operator $\Ricci$ of $(G_{A,B,C},\ip)$ is given by $\la\Ricci\ag,\ngo\ra=0$, $\Ricci|_\ngo=\unm[A,A^t]+\unm[B,B^t]+\unm[C,C^t]$ and 
\begin{equation}\label{Ric}
\Ricci|_\ag=-\left[\begin{smallmatrix}
\tr{S(A)^2} & \tr{S(A)S(B)} & \tr{S(A)S(C)} \\ 
\tr{S(A)S(B)} & \tr{S(B)^2} & \tr{S(B)S(C)} \\ 
\tr{S(A)S(C)} & \tr{S(B)S(C)} & \tr{S(C)^2}
\end{smallmatrix}\right].    
\end{equation}
In particular, $\Ricci\leq 0$ as soon as the three matrices are normal, and by \cite[Theorem 4.8]{solvsolitons}, in the compatible case, $(G_{A,B,C},\ip)$ is a solvsoliton (or expanding Ricci soliton) if and only if $A,B,C$ are all normal matrices and $\Ricci|_\ag=cI$ for some $c\in\RR$.

\subsection{Diagonal case} 
The subgroup $\SO(4)\subset G_2$ mentioned in Section \ref{equiv0} can be used to show the following.  

\begin{lemma}\label{symm}
If $A,B,C$ are all symmetric, then $(G_{A,B,C},\vp)$ is equivalent to $(G_{A_1,B_1,C_1},\vp)$ for some diagonal matrices $A_1,B_1,C_1\in\slg_4(\RR)$.  
\end{lemma}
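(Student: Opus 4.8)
The plan is to combine two standard facts. First, a commuting family of symmetric matrices can be simultaneously diagonalized by an \emph{orthogonal} conjugation. Second, the equivalence criterion \eqref{equiv1} tells us precisely which modifications of the triple $(A,B,C)$ — namely conjugation of all three by a single $h_2\in\SO(4)$, followed by the $\SO(3)$-change of basis on $\ag$ recorded by $h_1$ — yield an equivalent $G_2$-structure $(G_{A_1,B_1,C_1},\vp)$. So the whole argument is: diagonalize $A,B,C$ simultaneously by an element of $\SO(4)$, feed that element into \eqref{equiv1}, and read off that the output triple is diagonal.

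Concretely I would proceed as follows. \textbf{Step 1.} Since $A,B,C$ are symmetric and pairwise commute (commutativity being forced by the Jacobi condition for $\ggo_{A,B,C}$, as noted in Section~\ref{geom}), there is an orthonormal basis of $\RR^4$ consisting of common eigenvectors; let $P\in\Or(4)$ be the corresponding change of basis, so that $P^{-1}AP$, $P^{-1}BP$, $P^{-1}CP$ are all diagonal. \textbf{Step 2.} Arrange $P\in\SO(4)$: if $\det P=-1$, replace one of the chosen eigenvectors by its negative; it is still a common eigenvector of $A,B,C$, so the three conjugated matrices stay diagonal while $\det P$ becomes $+1$. \textbf{Step 3.} Put $h_2:=P^{-1}\in\SO(4)$ and pick $h_1\in\SO(3)$ with $\diag(h_1,h_2)\in G_2$, which exists by the discussion in Section~\ref{equiv0}. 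By \eqref{equiv1} (equivalently, by direct verification that $\diag(h_1,h_2)$ is a Lie algebra isomorphism $\ggo_{A,B,C}\to\ggo_{A_1,B_1,C_1}$ lying in $G_2$), the structure $(G_{A,B,C},\vp)$ is equivalent to $(G_{A_1,B_1,C_1},\vp)$, where $A_1,B_1,C_1$ are the three matrices given by \eqref{equiv1}. \textbf{Step 4.} Each of $h_2Ah_2^{-1}=P^{-1}AP$, $h_2Bh_2^{-1}$, $h_2Ch_2^{-1}$ is diagonal by construction, and $A_1,B_1,C_1$ are $\RR$-linear combinations of these three matrices; hence $A_1,B_1,C_1$ are diagonal. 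Since conjugation and linear combinations preserve tracelessness, $A_1,B_1,C_1\in\slg_4(\RR)$, which is what is claimed.

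I do not expect a genuine obstacle here; the content is exactly the simultaneous orthogonal diagonalization of $A,B,C$ together with the already-established form \eqref{equiv1} of the equivalence relation. The two points requiring a word of care are both mild: one must land in $\SO(4)$ rather than merely in $\Or(4)$ (handled in Step~2 by adjusting a sign), and in \eqref{equiv1} the matrix $h_1$ is \emph{not} free — it is pinned down by $h_2$. The latter is harmless for us: whatever $h_1\in\SO(3)$ it turns out to be, the entries of $h_1^{-1}$ in \eqref{equiv1} only combine the three matrices $h_2Ah_2^{-1},h_2Bh_2^{-1},h_2Ch_2^{-1}$ linearly, and the space of diagonal matrices is closed under linear combinations, so the conclusion of Step~4 is unaffected.
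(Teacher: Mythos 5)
Your proof is correct and follows essentially the same route as the paper's: simultaneously diagonalize the commuting symmetric matrices by an element of $\SO(4)$, extend it to an element of $G_2$ via the subgroup $\SO(4)\subset G_2$ described in Section~\ref{equiv0}, and observe that the resulting triple in \eqref{equiv1} consists of linear combinations of diagonal matrices, hence is diagonal. The only difference is that you spell out the $\Or(4)$-versus-$\SO(4)$ sign adjustment, which the paper leaves implicit.
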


\begin{proof}
There exists $h\in G_2$ such that if we set $h_2:=h|_{\ngo}\in\SO(4)$, then $A_1=h_2Ah_2^{-1}$, $B_1=h_2Bh_2^{-1}$ and $C_1=h_2Ch_2^{-1}$ are all diagonal matrices.  Thus the isomorphism $G_{A,B,C}\rightarrow G_{A_2,B_2,C_2}$ defined by $h$, where $A_2,B_2,C_2$ are the linear combinations of $A_1,B_1,C_1$ determined by $(h|_\ag)^{-1}$ as in \eqref{equiv1}, determines an equivariant equivalence between the corresponding left-invariant $G_2$-structures, concluding the proof.  
\end{proof}

\begin{remark}\label{equiv2}
According to \eqref{equiv1}, each equivalence class in 
$$
\left\{(G_{A,B,C},\vp): A,B,C \;\mbox{are linearly independent diagonal matrices}\right\}  
$$
has only finitely many elements.  Indeed, $h_2$ must belong to 
$$
N_{\SO(4)}(\ag):=\{ h\in\SO(4):h\ag h^{-1}\subset\ag\}\subset S_4\ltimes\ZZ_2^4,
$$ 
where $\ag\subset\slg_4(\RR)$ is the subspace of all diagonal matrices.   
\end{remark}

We next focus on the case when the matrices $A,B,C$ are all diagonal.  We consider the orthogonal basis of $\Lambda^2\ngo^*$ defined by
\begin{equation}\label{beta}
\bca:=\{\overline{\omega}_7,\overline{\omega}_1,\overline{\omega}_2,\omega_7,\omega_1,\omega_2\}, 
\end{equation}
where $\omega_i$ is as in \eqref{phi2} and 
$$
\overline{\omega}_7:=e^{34}-e^{56}, \qquad \overline{\omega}_1:=e^{35}+e^{46},\qquad \overline{\omega}_2:=-e^{36}+e^{45}.  
$$
Note that the norm of every element in $\bca$ is $\sqrt{2}$,  $\ast_4\omega_i=\omega_i$, $\ast_4\overline{\omega}_i=-\overline{\omega}_i$ for all $i$ and 
$$
\omega_i\wedge\omega_j =\omega_i\wedge\overline{\omega}_j=\overline{\omega}_i\wedge\overline{\omega}_j=0, \qquad\forall i\ne j. 
$$  

The following formulas follow in a straightforward way from Proposition \ref{formulas} and the fact that with respect to the ordered bases $\{ e_3,\dots,e_6\}$ and $\bca$ of $\ngo$ and $\Lambda^2\ngo^*$, respectively,   
\begin{equation}\label{tita-d}
\theta\left(\Diag(x_1,x_2,x_3,x_4)\right) = 
-\left[\begin{smallmatrix}
&&&x_1+x_2&&\\
&0&&&x_1+x_3&\\
&&&&&x_1+x_4\\
x_1+x_2&&&&&\\
&x_1+x_3&&&0&\\
&&x_1+x_4&&&\\
\end{smallmatrix}\right], 
\end{equation}
provided that $x_1+\dots+x_4=0$.    

\begin{proposition}\label{formulas-d}
For each $G_2$-structure $(G_{A,B,C},\vp)$ with $A,B,C\in\slg_4(\RR)$ diagonal matrices, say  
$$
A:=\Diag(a_1,a_2,a_3,a_4), \quad
B:=\Diag(b_1,b_2,b_3,b_4), \quad
C:=\Diag(c_1,c_2,c_3,c_4),   
$$
for some $a_i,b_i,c_i\in\RR$, one has that
\begin{align}
d\vp =& (-(b_1+b_2)\overline{\omega}_7+(a_1+a_3)\overline{\omega}_1)\wedge e^{17} + (-(c_1+c_2)\overline{\omega}_7+(a_1+a_4)\overline{\omega}_2)\wedge e^{27}  \label{dphi-d} \\ 
&+ (-(b_1+b_4)\overline{\omega}_2+(c_1+c_3)\overline{\omega}_1)\wedge e^{12}, \notag\\ 
\ast d\vp=& (-(b_1+b_2)\overline{\omega}_7+(a_1+a_3)\overline{\omega}_1)\wedge e^{2} - (-(c_1+c_2)\overline{\omega}_7+(a_1+a_4)\overline{\omega}_2)\wedge e^{1}  \label{sdphi-d} \\ 
&- (-(b_1+b_4)\overline{\omega}_2+(c_1+c_3)\overline{\omega}_1)\wedge e^{7}, \notag\\ 
\ast d\ast d\vp=& \big(((b_1+b_2)^2+(c_1+c_2)^2)\omega_7-(b_1+b_3)(a_1+a_3)\omega_1  \label{sdsdphi-d}\\ 
&  -(c_1+c_4)(a_1+a_4)\omega_2\big)\wedge e^{7} \notag \\
&+\big(((b_1+b_4)^2+(a_1+a_4)^2)\omega_2-(b_1+b_3)(c_1+c_3)\omega_1 \notag\\ 
& -(a_1+a_2)(c_1+c_2)\omega_7\big)\wedge e^{2} \notag\\
&+\big(-(c_1+c_4)(b_1+b_4)\omega_2+((c_1+c_3)^2+(a_1+a_3)^2)\omega_1 \notag\\ 
& -(a_1+a_2)(b_1+b_2)\omega_7\big)\wedge e^{1}, \notag\\ 
d\psi =& -\big((a_1+a_2)\overline{\omega}_7+(b_1+b_3)\overline{\omega}_1+(c_1+c_4)\overline{\omega}_2\big)\wedge e^{127}, \label{dsphi-d} \\
\ast d \psi =& (a_1+a_2)\overline{\omega}_7+(b_1+b_3)\overline{\omega}_1+(c_1+c_4)\overline{\omega}_2, \label{sdsphi-d} \\ 
d\ast d \psi =& -\left((a_1+a_2)^2\omega_7+(a_1+a_3)(b_1+b_3)\omega_1+(a_1+a_4)(c_1+c_4)\omega_2\right) \wedge e^7 \label{dsdsphi-d} \\ 
& -\left((a_1+a_2)(b_1+b_2)\omega_7+(b_1+b_3)^2\omega_1+(b_1+b_4)(c_1+c_4)\omega_2\right)\wedge e^1 \notag\\ 
& -\left((a_1+a_2)(c_1+c_2)\omega_7+(c_1+c_3)(b_1+b_3)\omega_1+(c_1+c_4)^2\omega_2\right) \wedge e^2.\notag
\end{align}
\end{proposition}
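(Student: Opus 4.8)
The plan is to derive each of the six identities by specializing the corresponding formula in Proposition \ref{formulas} to diagonal $A,B,C$; in particular no Hodge-star computation is needed here, since Proposition \ref{formulas} already records $\ast d\vp$, $\ast d\ast d\vp$, $\ast d\psi$ and $d\ast d\psi$ explicitly in terms of the operators $\theta(A),\theta(B),\theta(C)$ and their transposes acting on $\omega_7,\omega_1,\omega_2$, with exactly the same wedge factors $e^{17},e^{27},e^{12},e^7,e^1,e^2,e^{127}$ that occur in Proposition \ref{formulas-d}.

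First I would use that diagonal matrices are symmetric, so $A^t=A$, $B^t=B$, $C^t=C$, which removes every transpose from the formulas of Proposition \ref{formulas}. Next I would read off from \eqref{tita-d} the action of $\theta$ on the basis $\bca$ of $\Lambda^2\ngo^*$: for a traceless diagonal matrix $D=\Diag(x_1,x_2,x_3,x_4)$ one has $\theta(D)\overline{\omega}_7=-(x_1+x_2)\omega_7$, $\theta(D)\overline{\omega}_1=-(x_1+x_3)\omega_1$, $\theta(D)\overline{\omega}_2=-(x_1+x_4)\omega_2$, and symmetrically $\theta(D)\omega_7=-(x_1+x_2)\overline{\omega}_7$, $\theta(D)\omega_1=-(x_1+x_3)\overline{\omega}_1$, $\theta(D)\omega_2=-(x_1+x_4)\overline{\omega}_2$; that is, $\theta(D)$ interchanges $\overline{\omega}_i$ and $\omega_i$ up to a scalar depending only on $D$ and on $i\in\{7,1,2\}$ through the index pairs $\{1,2\},\{1,3\},\{1,4\}$ respectively. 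Plugging these values of $\theta(\cdot)\omega_i$ into \eqref{dphi}, \eqref{sdphi}, \eqref{dsphi} and \eqref{sdsphi} and collecting terms immediately yields \eqref{dphi-d}, \eqref{sdphi-d}, \eqref{dsphi-d} and \eqref{sdsphi-d}. For the two second-order identities I would apply $\theta$ twice: since $\theta(D')\omega_i$ is a multiple of $\overline{\omega}_i$ and $\theta(D)\overline{\omega}_i$ is a multiple of $\omega_i$, each composition $\theta(D)\theta(D')\omega_i$ is again a multiple of $\omega_i$ whose scalar is the product of the two individual scalars; substituting these products into \eqref{sdsdphi} and \eqref{dsdsphi} and rearranging gives \eqref{sdsdphi-d} and \eqref{dsdsphi-d}.

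The computation is entirely mechanical, so the only place an error could enter is the sign and index bookkeeping: keeping straight which of $\omega_7,\omega_1,\omega_2$ is governed by which pair of diagonal entries, and the minus signs coming both from \eqref{tita-d} and from the signs already present in Proposition \ref{formulas}. There is no conceptual obstacle, and I would expect the verification to be a short, if careful, exercise.
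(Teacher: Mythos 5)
Your proposal is correct and is exactly the paper's own argument: the paper derives Proposition \ref{formulas-d} by substituting the diagonal-case action of $\theta$ recorded in \eqref{tita-d} (which sends $\omega_i\mapsto$ a multiple of $\overline{\omega}_i$ and vice versa, with the scalars $-(x_1+x_2),-(x_1+x_3),-(x_1+x_4)$ you identify) into the general formulas of Proposition \ref{formulas}, with the transposes disappearing because diagonal matrices are symmetric. Your handling of the second-order formulas by composing the two scalar factors is likewise the intended computation.
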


We deduce from \eqref{torsion2}, \eqref{phi2}, \eqref{dphi-d} and \eqref{sdphi-d} the following (recall that $\overline{\omega}_i\wedge\omega_j=0$ for all $i,j$).  

\begin{corollary}\label{formulas-d-cor1}
Any $G_2$-structure $(G_{A,B,C},\vp)$ with $A,B,C\in\slg_4(\RR)$ diagonal matrices satisfies that, 
$$
\tau_0=0, \qquad \tau_1=0, \qquad \tau_2=-\ast d\psi, \qquad \tau_3=\ast d\vp. 
$$ 
\end{corollary}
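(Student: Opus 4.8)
The plan is to read off each torsion form directly from the defining equations \eqref{torsion2}, using the explicit formulas for $d\vp$, $\ast d\vp$, $d\psi$ and $\ast d\psi$ in Proposition \ref{formulas-d} together with the structure of $\vp$ and $\psi$ in \eqref{phi2} and \eqref{sphi}. The key observation is the orthogonality relations in the basis $\bca$: since $A,B,C$ are diagonal, every term in $d\vp$ and in $\ast d\vp$ involves only the self-dual forms $\overline{\omega}_7,\overline{\omega}_1,\overline{\omega}_2$ in the $\Lambda^2\ngo^*$ slot, whereas $\vp$ and $\psi$ are built only from $\omega_7,\omega_1,\omega_2$; and we have recorded $\overline{\omega}_i\wedge\omega_j=0$ for all $i,j$. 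This will force the relevant wedge products to vanish.

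First I would compute $\tau_0=\tfrac17\ast(d\vp\wedge\vp)$. Writing $\vp$ as in \eqref{phi2} and $d\vp$ as in \eqref{dphi-d}, each summand of $d\vp$ is (a $2$-form in $\{\overline{\omega}_i\}$)$\wedge$(a $2$-form in $e^1,e^2,e^7$), while each summand of $\vp$ is (a $2$-form in $\{\omega_i\}$ or the pure $\ngo^*$-free piece $e^{127}$)$\wedge$(a form in $e^1,e^2,e^7$). In the wedge $d\vp\wedge\vp$ the part of $\Lambda^2\ngo^*\wedge\Lambda^2\ngo^*$ that survives must pair some $\overline{\omega}_i$ with some $\omega_j$, which is zero; and the $e^{127}$ term of $\vp$ cannot contribute a full $e^{127}$ together with the $e^1,e^2,e^7$-factors already present in $d\vp$. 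Hence $d\vp\wedge\vp=0$ and $\tau_0=0$. Next, $\tau_1=-\tfrac1{12}\ast(\ast d\vp\wedge\vp)$: the same argument applies verbatim, since by \eqref{sdphi-d} $\ast d\vp$ also lies in $\spann\{\overline{\omega}_7,\overline{\omega}_1,\overline{\omega}_2\}\wedge\Lambda^1\la e^1,e^2,e^7\ra$, so $\ast d\vp\wedge\vp=0$ and $\tau_1=0$. With $\tau_1=0$ in hand, the last two formulas in \eqref{torsion2} collapse immediately to $\tau_2=-\ast d\psi$ and $\tau_3=\ast d\vp-\tau_0\vp=\ast d\vp$, which is the claim.

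The only mild subtlety — and the place I would be most careful — is the bookkeeping in the wedge products $d\vp\wedge\vp$ and $\ast d\vp\wedge\vp$: one must check that \emph{no} combination of a $\Lambda^2\ngo^*$-factor with a $\Lambda^2\la e^1,e^2,e^7\ra$-factor from one form and the complementary factors from the other can assemble the full volume form $\overline{\omega}\wedge\ast(\cdots)$ — but since the $\ngo^*$-parts are always $\overline{\omega}_i$ against $\omega_j$ (killed by $\overline{\omega}_i\wedge\omega_j=0$) or $\overline{\omega}_i$ against nothing (insufficient degree in $\ngo^*$), every term genuinely vanishes. This is routine given the relations already listed before Proposition \ref{formulas-d}, so no real obstacle arises; the corollary is essentially a direct substitution.
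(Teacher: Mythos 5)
Your proof is correct and follows essentially the same route as the paper, which deduces the corollary from \eqref{torsion2}, \eqref{phi2}, \eqref{dphi-d} and \eqref{sdphi-d} together with the relation $\overline{\omega}_i\wedge\omega_j=0$; your degree bookkeeping (including the vanishing of the terms involving $e^{127}$) just spells out the details the paper leaves implicit.
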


\begin{remark}
All these $G_2$-structures therefore belong to the class $\wca_2\oplus\wca_3$, which has been characterized in \cite{Agr} as those $G_2$-structures whose corresponding unit spinor is harmonic.  
\end{remark}

\begin{remark}\label{equiv3}
It follows from Remark \ref{equiv2} that each of these $G_2$-structures $(G_{A,B,C},\vp)$ with $A,B,C$ diagonal and linearly independent belongs to a $9$-parameter open subfamily which is pairwise non-equivalent.  
\end{remark}

We can also use Proposition \ref{formulas-d} to compute the Laplacians
$$
\Delta\vp=\ast d\ast d\vp - d\ast d\psi \quad\mbox{and}\quad \Delta\psi=-\ast d\ast d\psi + d\ast d\ast\psi =\ast\Delta\vp.  
$$

\begin{corollary}\label{formulas-d-cor2}
For any $G_2$-structure $(G_{A,B,C},\vp)$, where $A,B,C\in\slg_4(\RR)$ are diagonal matrices, one has that 
\begin{align}
\Delta\vp = & \left((a_1+a_2)^2+(b_1+b_2)^2+(c_1+c_2)^2\right)\omega_7\wedge e^7 \notag\\ 
& +\left((a_1+a_3)^2+(b_1+b_3)^2+(c_1+c_3)^2\right)\omega_1 \wedge e^{1} \label{Deltaphi}\\ 
&  +\left((a_1+a_4)^2+(b_1+b_4)^2+(c_1+c_4)^2\right)\omega_2\wedge e^{2}. \notag \\
\Delta\psi = & \left((a_1+a_2)^2+(b_1+b_2)^2+(c_1+c_2)^2\right)\omega_7\wedge e^{12} \notag\\ 
& +\left((a_1+a_3)^2+(b_1+b_3)^2+(c_1+c_3)^2\right)\omega_1 \wedge e^{27} \label{Deltasphi}\\ 
&  -\left((a_1+a_4)^2+(b_1+b_4)^2+(c_1+c_4)^2\right)\omega_2\wedge e^{17}. \notag
\end{align}
\end{corollary}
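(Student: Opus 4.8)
The plan is to feed the formulas of Proposition~\ref{formulas-d} into the identity $\Delta\vp=\ast d\ast d\vp-d\ast d\psi$ recalled just above the statement, and then to get $\Delta\psi$ for free from $\Delta\psi=\ast\Delta\vp$. First I would take \eqref{sdsdphi-d} for $\ast d\ast d\vp$ and \eqref{dsdsphi-d} for $d\ast d\psi$ and write each of them as a sum of three blocks, according to whether a given summand is wedged with $e^7$, with $e^1$, or with $e^2$. In every block each summand is a quadratic polynomial in the entries $a_i,b_i,c_i$ times one of $\omega_7,\omega_1,\omega_2$, so the computation of $\Delta\vp$ splits into nine scalar identities, namely the coefficient of each $\omega$ in each of the three blocks.

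The key point is that in each block exactly one of the three two-forms survives --- $\omega_7$ in the $e^7$-block, $\omega_1$ in the $e^1$-block, $\omega_2$ in the $e^2$-block --- while the coefficients of the other two cancel between $\ast d\ast d\vp$ and $-d\ast d\psi$. For instance, in the $e^7$-block the coefficient of $\omega_7$ is $(b_1+b_2)^2+(c_1+c_2)^2$ coming from \eqref{sdsdphi-d} plus $(a_1+a_2)^2$ coming from $-d\ast d\psi$ via \eqref{dsdsphi-d}, which add up to the coefficient $(a_1+a_2)^2+(b_1+b_2)^2+(c_1+c_2)^2$ in \eqref{Deltaphi}; whereas the coefficient of $\omega_1$ in that block is $-(b_1+b_3)(a_1+a_3)$ from \eqref{sdsdphi-d} and $+(a_1+a_3)(b_1+b_3)$ from $-d\ast d\psi$, which cancel, and similarly for $\omega_2$. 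Permuting the roles of the indices handles the $e^1$- and $e^2$-blocks in exactly the same way, yielding \eqref{Deltaphi}.

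For \eqref{Deltasphi} I would apply $\ast$ term by term to \eqref{Deltaphi}, using the Hodge duality relations $\ast(\omega_7\wedge e^7)=\omega_7\wedge e^{12}$, $\ast(\omega_1\wedge e^1)=\omega_1\wedge e^{27}$ and $\ast(\omega_2\wedge e^2)=-\omega_2\wedge e^{17}$, which one checks directly on the relevant monomials and which are consistent with $\psi=\ast\vp$ written in the form \eqref{phi2}--\eqref{sphi}. There is no genuine obstacle here: the whole argument is a bookkeeping computation, and the only thing that requires care is the signs, so that the off-diagonal terms really do cancel in pairs. This cancellation is ultimately a reflection of the symmetry of $\theta(A),\theta(B),\theta(C)$ for diagonal $A,B,C$, visible in \eqref{tita-d}.
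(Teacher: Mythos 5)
Your proposal is correct and follows exactly the route the paper intends: the corollary is obtained by substituting \eqref{sdsdphi-d} and \eqref{dsdsphi-d} into $\Delta\vp=\ast d\ast d\vp-d\ast d\psi$, observing the pairwise cancellation of the off-diagonal $\omega_i$-terms in each of the $e^7$-, $e^1$-, $e^2$-blocks, and then applying $\Delta\psi=\ast\Delta\vp$ term by term via the duality relations read off from \eqref{phi2}--\eqref{sphi}. The coefficients you record in the sample block match the paper's formulas, so there is nothing to add.
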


\subsection{Closed $G_2$-structures}
The following $3$-parameter family of {\it closed} $G_2$-structures (i.e., $d\vp=0$) was found and studied in \cite{LS-ERP}.

\begin{example}\label{abc}
Consider $(G_{A_0,B_0,C_0},\vp)$ for 
$$
A_0:=\Diag(a,a,-a,-a), \quad
B_0:=\Diag(b,-b,b,-b), \quad
C_0:=\Diag(c,-c,-c,c),   
$$
where $a,b,c\in\RR$, which we will denote by $(G_{a,b,c},\vp)$ from now on.  It follows from \eqref{dphi-d} that  $(G_{a,b,c},\vp)$ is always closed and it was proved in \cite{LS-ERP} that the following conditions are equivalent provided that $a\geq b\geq c> 0$ (in particular $G_{a,b,c}$ is isomorphic to $G_J$) and $a^2+b^2+c^2=3$:

\begin{enumerate}[{\small $\bullet$}]
\item $(G_{a,b,c},\vp)$ is extremally Ricci pinched. 
\item $(G_{a,b,c},\vp)$ is a (steady) Laplacian soliton, i.e., $d\tau_2=\lca_X\vp$ for some vector field $X$.   
\item $(G_{a,b,c},\ip)$ is an (expanding) Ricci soliton. 
\item $(G_{a,b,c},\vp)$ is quadratic (i.e., $d\tau = \frac{1}{7}(1+q)|\tau|^2\vp + q\ast(\tau\wedge\tau)$ for some $q\in\RR$). 
\item $F(a,b,c):=F((G_{a,b,c},\ip))=3$ (see \eqref{RP}).  It follows from \eqref{Ric} that $F(a,b,c)\leq 3$ for all $a,b,c$.  
\item $a=b=c=1$.
\end{enumerate}
Furthermore, all the Laplacian flow solutions $\vp(t)$ (i.e., $\dpar\vp(t)=\Delta\vp(t)$) starting at the closed $G_2$-structure on $G_J$ determined by any element of this family are immortal and satisfy that $(24)^{-3/2}|\tau_2(t)|_t^3\vp(t)$ smoothly converges up to pull-back by diffeomorphisms to the Laplacian soliton on $G_J$ defined by $(G_{1,1,1},\vp)$ as $t\to\infty$.
\end{example}

The diagonal closed case is exhausted by the above example.  

\begin{proposition}\label{diag-c}
If $A,B,C$ are all diagonal, then $(G_{A,B,C},\vp)$ has $d\vp=0$ if and only if it is equal to $(G_{a,b,c},\vp)$ for some $a,b,c\in\RR$ (see Example \ref{abc}).   In that case, $\Delta\vp=d\tau_2$ and
\begin{align*}
\tau_2 =& -2a\overline{\omega}_7-2b\overline{\omega}_1-2c\overline{\omega}_2, \\ 
\Delta\vp =& d\tau_2 = 4a^2\omega_7\wedge e^7+4b^2\omega_1\wedge e^1+4c^2\omega_2\wedge e^2.  
\end{align*}
\end{proposition}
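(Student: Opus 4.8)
The plan is to read everything off the explicit formulas of Proposition \ref{formulas-d} together with a short linear-algebra step. First I would use \eqref{dphi-d}, which writes $d\vp$ as a sum of six terms, each of the form $\overline{\omega}_i\wedge e^{jk}$ with $\overline{\omega}_i$ one of $\overline{\omega}_7,\overline{\omega}_1,\overline{\omega}_2\in\Lambda^2\ngo^*$ and $e^{jk}$ one of $e^{17},e^{27},e^{12}\in\Lambda^2\ag^*$. Since the three $\overline{\omega}_i$ are linearly independent (they belong to the orthogonal basis $\bca$) and the three $e^{jk}$ are linearly independent, the six wedge products $\overline{\omega}_i\wedge e^{jk}$ occurring in \eqref{dphi-d} are linearly independent in $\Lambda^4\ggo^*$. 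Hence $d\vp=0$ is equivalent to the vanishing of all six scalar coefficients, i.e.\ to
$$
a_1+a_3=a_1+a_4=0,\qquad b_1+b_2=b_1+b_4=0,\qquad c_1+c_2=c_1+c_3=0 .
$$

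Next I would feed these six equations into the trace-zero conditions $a_1+\dots+a_4=b_1+\dots+b_4=c_1+\dots+c_4=0$. For $A$: the first pair gives $a_3=a_4=-a_1$, and tracelessness then forces $a_2=a_1$, so setting $a:=a_1$ we obtain $A=\Diag(a,a,-a,-a)=A_0$. The identical computation for $B$ and $C$ yields $B=\Diag(b,-b,b,-b)=B_0$ and $C=\Diag(c,-c,-c,c)=C_0$ with $b:=b_1$, $c:=c_1$; that is, $(G_{A,B,C},\vp)=(G_{a,b,c},\vp)$ as in Example \ref{abc}. The converse implication is already contained in Example \ref{abc} (alternatively, plug $A_0,B_0,C_0$ into \eqref{dphi-d} and note that every coefficient vanishes).

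For the closed-case formulas I would specialise the torsion and Laplacian expressions. By Corollary \ref{formulas-d-cor1}, $\tau_2=-\ast d\psi$, and \eqref{sdsphi-d} gives $\ast d\psi=(a_1+a_2)\overline{\omega}_7+(b_1+b_3)\overline{\omega}_1+(c_1+c_4)\overline{\omega}_2$; substituting $a_1+a_2=2a$, $b_1+b_3=2b$, $c_1+c_4=2c$ yields $\tau_2=-2a\overline{\omega}_7-2b\overline{\omega}_1-2c\overline{\omega}_2$. For the Laplacian, since $d\vp=0$ the identity $\Delta\vp=\ast d\ast d\vp-d\ast d\psi$ reduces to $\Delta\vp=-d\ast d\psi=d(-\ast d\psi)=d\tau_2$; then substituting the values of the $a_i,b_i,c_i$ into \eqref{Deltaphi} of Corollary \ref{formulas-d-cor2} (or directly into \eqref{dsdsphi-d}) gives $\Delta\vp=4a^2\,\omega_7\wedge e^7+4b^2\,\omega_1\wedge e^1+4c^2\,\omega_2\wedge e^2$.

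There is essentially no obstacle: the proposition is a direct corollary of Proposition \ref{formulas-d} and its two corollaries. The only non-formula input is the trivial remark that products $\overline{\omega}_i\wedge e^{jk}$ with the $\overline{\omega}_i$ in $\Lambda^2\ngo^*$ and the $e^{jk}$ in $\Lambda^2\ag^*$ remain linearly independent whenever the individual factors are; the one place asking for a moment of care is the linear-algebra step, where the six vanishing conditions, combined with tracelessness, force precisely the shapes $A_0,B_0,C_0$ and no others.
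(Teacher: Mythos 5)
Your proposal is correct and follows essentially the same route as the paper: the paper's (one-line) proof likewise reads the closedness conditions off \eqref{dphi-d} together with the tracelessness encoded in \eqref{tita-d}, and obtains $\tau_2$ and $d\tau_2$ from \eqref{sdsphi-d} and \eqref{dsdsphi-d}. You have merely written out the linear-algebra step (the six vanishing coefficients plus $\tr A=\tr B=\tr C=0$ forcing the shapes $A_0,B_0,C_0$) that the paper leaves implicit, and all your substitutions check out.
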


\begin{proof}
The first statement easily follows from \eqref{dphi-d} and \eqref{tita-d} and the formulas for $\tau_2$ and $d\tau_2$ from \eqref{sdsphi-d} and \eqref{dsdsphi-d}, respectively.    
\end{proof}

In the general case, it follows from \eqref{dphi} that $(G_{A,B,C},\vp)$ is closed if and only if 
$$
\theta(B)\omega_7=\theta(A)\omega_1, \qquad  \theta(C)\omega_7=\theta(A)\omega_2, \qquad \theta(B)\omega_2=\theta(C)\omega_1.  
$$
It is easy to see that this holds if and only if the matrices of $\theta(A),\theta(B),\theta(C)$ with respect to the basis $\bca$ given in \eqref{beta} have respectively the form 
$$
\left[\begin{smallmatrix}
&&&a_{14}&a_{15}&a_{16}\\
&A_1&&a_{24}&a_{25}&a_{26}\\
&&&a_{34}&a_{35}&a_{36}\\
a_{14}&a_{24}&a_{34}&&&\\
a_{15}&a_{25}&a_{35}&&0&\\
a_{16}&a_{26}&a_{36}&&&\\
\end{smallmatrix}\right], \quad 
\left[\begin{smallmatrix}
&&&a_{15}&b_{15}&b_{16}\\
&B_1&&a_{25}&b_{25}&b_{26}\\
&&&a_{35}&b_{35}&b_{36}\\
a_{15}&a_{25}&a_{35}&&&\\
b_{15}&b_{25}&b_{35}&&0&\\
b_{16}&b_{26}&b_{36}&&&\\
\end{smallmatrix}\right], \quad 
\left[\begin{smallmatrix}
&&&a_{16}&b_{16}&c_{16}\\
&C_1&&a_{26}&b_{26}&c_{26}\\
&&&a_{36}&b_{36}&c_{36}\\
a_{16}&a_{26}&a_{36}&&&\\
b_{16}&b_{26}&b_{36}&&0&\\
c_{16}&c_{26}&c_{36}&&&\\
\end{smallmatrix}\right],
$$
for some skew-symmetric $3\times 3$ matrices $A_1,B_1,C_1$.  We do not know if $A_1,B_1,C_1$ must all necessarily vanish in the compatible case.  If this was true, then it would follow from Lemma \ref{symm} and Proposition \ref{diag-c} that any closed $(G_{A,B,C},\vp)$ with $G_{A,B,C}$ isomorphic to $G_J$ is equivalent to some $(G_{a,b,c},\vp)$.  Otherwise, these triples may provide new $G_2$-structures that are not equivalent to any $(G_{a,b,c},\vp)$.

\subsection{Coclosed $G_2$-structures}
We now use the formulas in Proposition \ref{formulas-d} to study the {\it coclosed} case (i.e., $d\psi=0$). 

\begin{proposition}\label{diag-cc}
Assume that $A,B,C$ are all diagonal .  Then $(G_{A,B,C},\vp)$ satisfies that $d\psi=0$ if and only if 
$$
A=\Diag(a_1,-a_1,a_2,-a_2), \quad
B=\Diag(b_1,b_2,-b_1,-b_2), \quad
C=\Diag(c_1,c_2,-c_2,-c_1),  
$$
for some $a_i,b_i,c_i\in\RR$.  In that case, $\Delta\vp=d^*d\vp$, $\Delta\psi=d\tau_3=\ast\Delta\vp$ and
\begin{align}
\tau_3 = & \left(-(b_1+b_2)\overline{\omega}_7+(a_1+a_2)\overline{\omega}_1\right)\wedge e^{2} 
 +\left((c_1+c_2)\overline{\omega}_7-(a_1-a_2)\overline{\omega}_2\right)\wedge e^{1}  \label{tau3}\\ 
& + \left((b_1-b_2)\overline{\omega}_2-(c_1-c_2)\overline{\omega}_1\right)\wedge e^{7}, \notag \\ 
\Delta\vp =& \left((b_1+b_2)^2+(c_1+c_2)^2\right)\omega_7\wedge e^7  
 +\left((b_1-b_2)^2+(a_1-a_2)^2\right)\omega_2\wedge e^2 \label{Dphi}\\ 
& +\left((c_1-c_2)^2+(a_1+a_2)^2\right)\omega_1\wedge e^1, \notag \\
\Delta\psi  =& \left((b_1+b_2)^2+(c_1+c_2)^2\right)\omega_7\wedge e^{12}  
 -\left((b_1-b_2)^2+(a_1-a_2)^2\right)\omega_2\wedge e^{17} \label{Dsphi} \\ 
& +\left((c_1-c_2)^2+(a_1+a_2)^2\right)\omega_1\wedge e^{27}. \notag
\end{align}
\end{proposition}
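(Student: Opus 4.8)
The plan is to obtain the coclosed condition directly from formula \eqref{dsphi-d} in Proposition \ref{formulas-d}, and then substitute the resulting constraints into the earlier formulas to read off $\tau_3$ and the Laplacians. By \eqref{dsphi-d}, writing $A=\Diag(a_1,\dots,a_4)$ etc., we have
$$
d\psi = -\big((a_1+a_2)\overline{\omega}_7+(b_1+b_3)\overline{\omega}_1+(c_1+c_4)\overline{\omega}_2\big)\wedge e^{127},
$$
and since $\overline{\omega}_7,\overline{\omega}_1,\overline{\omega}_2$ are linearly independent, $d\psi=0$ forces exactly
$$
a_1+a_2=0,\qquad b_1+b_3=0,\qquad c_1+c_4=0.
$$
Together with the trace-zero conditions $a_1+\dots+a_4=0$ (and similarly for $b,c$), these immediately give $a_3+a_4=0$, $b_2+b_4=0$, $c_2+c_3=0$, hence $A=\Diag(a_1,-a_1,a_2,-a_2)$, $B=\Diag(b_1,b_2,-b_1,-b_2)$, $C=\Diag(c_1,c_2,-c_2,-c_1)$ for suitable $a_i,b_i,c_i$. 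Conversely, any such triple clearly satisfies $a_1+a_2$, $b_1+b_3$, $c_1+c_4$ equal to zero, so $d\psi=0$. This proves the first statement; there is essentially no obstacle here, just bookkeeping of the diagonal entries.

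For the remaining formulas, I would first note that since $d\psi=0$ we have $\tau_2=-\ast d\psi=0$ by Corollary \ref{formulas-d-cor1}, so from \eqref{torsion} $d\vp=\ast\tau_3$, hence $\ast d\vp=\tau_3$ (again by Corollary \ref{formulas-d-cor1}), and therefore $\Delta\vp=\ast d\ast d\vp - d\ast d\psi = \ast d\ast d\vp = d^*d\vp$ since the $d\ast d\psi$ term vanishes. Also $d\tau_3 = d\ast d\vp$, and taking $\ast$ gives $\ast\Delta\vp = \ast\ast d\ast d\vp$; on the relevant space of $4$-forms this equals $\pm$ itself, and one checks it reproduces $\Delta\psi$. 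More cleanly, $\Delta\psi = \ast\Delta\vp$ is the last displayed identity in the paragraph preceding Corollary \ref{formulas-d-cor2}, valid in general.

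Then I would simply substitute the constraints $a_2\mapsto -a_1$ relabeled as $(a_1,a_2,a_3,a_4)=(a_1,-a_1,a_2,-a_2)$, etc., into the already-computed general diagonal formulas. For $\tau_3=\ast d\vp$, use \eqref{sdphi-d}: the coefficient $b_1+b_2$ becomes $b_1+b_2$, $a_1+a_3$ becomes $a_1+a_2$, $c_1+c_2$ becomes $c_1+c_2$, $a_1+a_4$ becomes $a_1-a_2$, $b_1+b_4$ becomes $b_1-b_2$, and $c_1+c_3$ becomes $c_1-c_2$; rearranging the wedge terms gives \eqref{tau3}. For $\Delta\vp$ and $\Delta\psi$, substitute the same relabeling into \eqref{Deltaphi} and \eqref{Deltasphi} of Corollary \ref{formulas-d-cor2}: $(a_1+a_2)^2+(b_1+b_2)^2+(c_1+c_2)^2$ becomes $(b_1+b_2)^2+(c_1+c_2)^2$ since $a_1+a_2=0$; the $\omega_1$-coefficient $(a_1+a_3)^2+(b_1+b_3)^2+(c_1+c_3)^2$ becomes $(a_1+a_2)^2+(c_1-c_2)^2$ since $b_1+b_3=0$; and the $\omega_2$-coefficient becomes $(a_1-a_2)^2+(b_1-b_2)^2$ since $c_1+c_4=0$. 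This yields \eqref{Dphi} and \eqref{Dsphi}. The only mildly delicate point is keeping the sign conventions for $\ast$ and $\ast_4$ consistent with the basis $\bca$ (recall $\ast_4\omega_i=\omega_i$, $\ast_4\overline{\omega}_i=-\overline{\omega}_i$), but this is already encoded in Proposition \ref{formulas-d}, so no genuine difficulty arises — the proof is a direct specialization.
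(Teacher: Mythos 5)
Your proposal is correct and follows essentially the same route as the paper's proof: derive the constraints $a_1+a_2=b_1+b_3=c_1+c_4=0$ from \eqref{dsphi-d} together with tracelessness, then read off $\tau_3$ from \eqref{sdphi-d} and the Laplacians by specialization. The only cosmetic difference is that you substitute into Corollary \ref{formulas-d-cor2} rather than directly into \eqref{sdsdphi-d}, which amounts to the same computation since $d\ast d\psi=0$ in the coclosed case.
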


\begin{remark}\label{equiv4}
By Remark \ref{equiv3}, for each of these coclosed $G_2$-structures $(G_{A,B,C},\vp)$ with $A,B,C$ diagonal and linearly independent there is an open neighborhood (depending on $6$ parameters) which is pairwise non-equivalent.  
\end{remark}

\begin{proof}
By using \eqref{dsphi-d} and \eqref{tita-d}, it can be easily shown that the matrices must have that form.  On the other hand, the formulas for $\tau_3$, $\Delta\vp$ and $\Delta\psi=\ast\Delta\vp$ follow from \eqref{sdphi-d} and \eqref{sdsdphi-d}, respectively.  
\end{proof}

Proposition \ref{diag-cc} is therefore providing a family of coclosed $G_2$-structures depending on six parameters $(a_1,\dots, c_2)$.  Recall that one of these $(G_{A,B,C},\vp)$, which will be denoted by 
$$
(G_{(a_1,\dots,c_2)},\vp)
$$ 
from now on,  is identified with a coclosed $G_2$-structure on $G_J$ if and only if $\{ A,B,C\}$ is linearly independent (or compatible).  Note that such condition defines an open and dense subset of $\RR^6$.    

It follows from \eqref{Ric} that the Ricci operator $\Ricci$ of such $(G_{(a_1,\dots,c_2)},\ip)$ is given by $\Ricci|_\ngo=0$, $\Ricci\ag\subset\ag$ and 
\begin{equation}\label{ric-2}
\Ricci|_\ag=-\left[\begin{smallmatrix}
2(a_1^2+a_2^2) & (a_1-a_2)(b_1-b_2) & (a_1+a_2)(c_1-c_2) \\ 
(a_1-a_2)(b_1-b_2) & 2(b_1^2+b_2^2) & (b_1+b_2)(c_1+c_2) \\ 
(a_1+a_2)(c_1-c_2) & (b_1+b_2)(c_1+c_2) & 2(c_1^2+c_2^2) 
\end{smallmatrix}\right].
\end{equation}
In particular, the following conditions are equivalent: 
\begin{enumerate}[{\small $\bullet$}]
\item $(G_{(a_1,\dots,c_2)},\ip)$ is torsion-free (see \eqref{tau3}).  

\item $(G_{(a_1,\dots,c_2)},\ip)$ is Ricci flat (or flat).  

\item $a_1=\dots=c_2=0$.  
\end{enumerate}

\subsection{Laplacian coflow solitons}\label{ccLS}

Natural ways to evolve coclosed $G_2$-structures on a given manifold $M$ are the {\it Laplacian coflow}, defined by 
$$
\dpar\psi(t)=\Delta\psi(t), 
$$
and the {\it modified Laplacian coflow} given by 
$$
\dpar\psi(t)=\Delta\psi(t) + 2 d\big((m-\tr{T})\vp(t)\big), \qquad  m\in\RR, 
$$
where $T$ is the (full) torsion tensor (i.e., $\iota_{T(X)}\psi = \nabla_X\vp$).  We refer to the survey \cite{Grg} and the references therein for more information.  A $G_2$-structure $\psi$ flows self-similarly according to the Laplacian coflow (resp. modified Laplacian coflow) if and only if $\Delta\psi = \lambda\psi + \lca_X\psi$ (resp. $\Delta\psi + 2 d\big((m-\tr{T})\vp\big)= \lambda\psi + \lca_X\psi$) for some $\lambda\in\RR$ and $X\in{\mathfrak X}(M)$, and in that case it is called a {\it (modified) Laplacian coflow soliton}.  We refer to \cite{BggFrnFin, BggFin, KrgMcKTsu, MrnSrp} for examples of (modified) Laplacian coflow solitons in diverse contexts.     

On a simply connected Lie group $G$, left-invariant self-similar solutions to these flows are provided by {\it algebraic solitons} (see \cite{LF, sol-HS}), i.e., 
\begin{equation}\label{algsol}
\Delta\psi = \lambda\psi + \theta(D)\psi, \qquad \lambda\in\RR, \quad D\in\Der(\ggo), \quad D^t=D.  
\end{equation}
One has here that $\theta(D)\psi=-\lca_{X_D}\psi$, where $X_D$ is the vector field on $G$ defined by the one-parameter subgroup of automorphisms of $G$ with derivatives $e^{tD}\in\Aut(\ggo)$.  Note that $\Delta\psi$ must be replaced by $\Delta\psi+ 2 d\big((m-\tr{T})\vp\big)$ in order to get algebraic solitons for the modified Laplacian coflow.  

In what follows, we study the (modified) Laplacian coflow and its solitons on the $6$-parameter family of 
coclosed $G_2$-structures $(G_{(a_1,\dots,c_2)},\vp)$ given in Proposition \ref{diag-cc}.  

\begin{proposition}\label{LS}
If $\{ A,B,C\}$ is linearly independent, then $(G_{(a_1,\dots,c_2)},\vp)$ is an algebraic soliton for the Laplacian coflow if and only if 
$$
(c_1+c_2)^2=(a_1-a_2)^2-4b_1b_2, \qquad (c_1-c_2)^2=(b_1-b_2)^2-4a_1a_2;  
$$
in that case, $\lambda=2\big((a_1-a_2)^2+(b_1-b_2)^2\big)>0$.  Furthermore, $(G_{(a_1,\dots,c_2)},\vp)$ is never an algebraic soliton for the modified Laplacian coflow.  
\end{proposition}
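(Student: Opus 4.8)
The plan is to solve the algebraic soliton equation \eqref{algsol} directly, and the first move is to describe all symmetric derivations of $\ggo_{A,B,C}$. Since $A,B,C$ are linearly independent diagonal matrices, the image of $\ad(\cdot)|_\ngo\colon\ag\to\slg_4(\RR)$ is the full $3$-dimensional space of traceless diagonal matrices, so $[\ggo,\ggo]=\ngo$; hence $\ngo$ is characteristic and $D(\ngo)\subseteq\ngo$ for every $D\in\Der(\ggo)$. Because $\ag\perp\ngo$, a symmetric $D$ also satisfies $D(\ag)\subseteq\ag$, and the derivation identity evaluated on $X\in\ag$, $Y\in\ngo$ becomes $\ad(DX)|_\ngo=[\,D|_\ngo,\ad(X)|_\ngo\,]$. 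The left-hand side is traceless diagonal, while the right-hand side is a commutator of a symmetric matrix with a traceless diagonal one and hence has zero diagonal; it can therefore only be diagonal if it vanishes, which (running over all $X$) forces $D|_\ngo$ to be diagonal and then $DX=0$ for all $X\in\ag$ by injectivity of $\ad(\cdot)|_\ngo$. Thus the symmetric derivations are exactly $D=\Diag(0,0,0,\delta_3,\delta_4,\delta_5,\delta_6)$ in the basis $\{e_7,e_1,e_2,e_3,\dots,e_6\}$.

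Next I would compute $\theta(D)\psi$ for such a $D$ from \eqref{sphi}, using that $\theta(D)$ acts on $1$-forms by $\theta(D)e^j=-\delta_j e^j$ (with $\delta_7=\delta_1=\delta_2=0$) and extends as a derivation: one gets $\theta(D)(e^{3456})=-(\delta_3+\delta_4+\delta_5+\delta_6)e^{3456}$, $\theta(D)\omega_7=-(\delta_3+\delta_4)e^{34}-(\delta_5+\delta_6)e^{56}$, and similarly for $\omega_1,\omega_2$. Imposing $\Delta\psi=\lambda\psi+\theta(D)\psi$ with $\Delta\psi$ taken from \eqref{Dsphi} and comparing coefficients of the basis $4$-forms: the $e^{3456}$-term gives $\delta_3+\delta_4+\delta_5+\delta_6=\lambda$; the $e^{12}$-term forces $\theta(D)\omega_7$ to be a multiple of $\omega_7=e^{34}+e^{56}$, i.e.\ $\delta_3+\delta_4=\delta_5+\delta_6$, and the $e^{27}$- and $e^{17}$-terms similarly give $\delta_3+\delta_5=\delta_4+\delta_6$ and $\delta_3+\delta_6=\delta_4+\delta_5$. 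These relations force $\delta_3=\delta_4=\delta_5=\delta_6=\lambda/4$, so $D=\tfrac{\lambda}{4}\Diag(0,0,0,1,1,1,1)$, and the three remaining scalar identities become
$$(b_1+b_2)^2+(c_1+c_2)^2=(c_1-c_2)^2+(a_1+a_2)^2=(b_1-b_2)^2+(a_1-a_2)^2=\tfrac{\lambda}{2}.$$
Since $4b_1b_2=(b_1+b_2)^2-(b_1-b_2)^2$ and $4a_1a_2=(a_1+a_2)^2-(a_1-a_2)^2$, the equality of these three quantities is equivalent to the two stated identities, with $\lambda=2\big((a_1-a_2)^2+(b_1-b_2)^2\big)$; conversely, given those identities, the displayed $D$ and $\lambda$ solve \eqref{algsol}. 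Finally, $\lambda=0$ would force $a_i=b_i=c_i=0$, contradicting linear independence, so $\lambda>0$.

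For the modified flow, I would first record that by Corollary \ref{formulas-d-cor1} together with $d\psi=0$ these structures lie in $\wca_3$ (i.e.\ $\tau_0=\tau_1=\tau_2=0$); as $\tr{T}$ is a multiple of $\tau_0$, this gives $\tr{T}=0$, and being left-invariant the modification term equals $2d\big((m-\tr{T})\vp\big)=2m\,d\vp$, so the soliton equation reads $\Delta\psi+2m\,d\vp=\lambda\psi+\theta(D)\psi$ with $D$ one of the diagonal derivations above. I would then split this equation according to $\Lambda^2\ngo^*=\spann\{\overline{\omega}_7,\overline{\omega}_1,\overline{\omega}_2\}\oplus\spann\{\omega_7,\omega_1,\omega_2\}$ from \eqref{beta}. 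On the right, $\psi$ and $\Delta\psi$ involve only the $\omega_i$, while the $\overline{\omega}$-part of $\theta(D)\psi$ (coming from $e^{34}=\tfrac12(\omega_7+\overline{\omega}_7)$, etc.) is supported on $\overline{\omega}_7\wedge e^{12}$, $\overline{\omega}_1\wedge e^{27}$, $\overline{\omega}_2\wedge e^{17}$; on the left, $\Delta\psi$ has no $\overline{\omega}$-part and, by \eqref{dphi-d}, $2m\,d\vp$ is supported on $\overline{\omega}_7\wedge e^{17}$, $\overline{\omega}_1\wedge e^{17}$, $\overline{\omega}_7\wedge e^{27}$, $\overline{\omega}_2\wedge e^{27}$, $\overline{\omega}_2\wedge e^{12}$, $\overline{\omega}_1\wedge e^{12}$. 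These nine $4$-forms being linearly independent, matching the $\overline{\omega}$-parts forces both the $\overline{\omega}$-part of $\theta(D)\psi$ to vanish and $m\,d\vp=0$; since $d\vp\ne 0$ whenever $(A,B,C)\ne(0,0,0)$, linear independence yields $m=0$. Hence no $m\ne0$ gives an algebraic soliton for the modified Laplacian coflow, which is the assertion.

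The step I expect to be the real obstacle is the first one: showing that a symmetric derivation of $\ggo_{A,B,C}$ cannot mix $\ag$ and $\ngo$ and must be diagonal on $\ngo$ — i.e.\ correctly pinning down $\Der(\ggo_{A,B,C})\cap\{D:D^t=D\}$ as a $4$-parameter space. Once this is in place, the two remaining steps amount to matching coefficients against the formulas of Proposition \ref{diag-cc} and \eqref{dphi-d}, which is routine.
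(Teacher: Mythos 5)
Your proof is correct and follows essentially the same route as the paper's: identify the symmetric derivations as $\Diag(0,0,0,d_3,\dots,d_6)$, compare coefficients in $\Delta\psi=\lambda\psi+\theta(D)\psi$ against Proposition \ref{diag-cc} to force $d_3=\dots=d_6$ and $r=s=t$ with $\lambda=2r>0$, and for the modified flow note that the extra term $2m\,d\vp$ is supported on $\overline{\omega}_i$-type $4$-forms that cannot be matched, forcing $m=0$. The only difference is that you supply the (correct) verification that the symmetric derivations form exactly this $4$-parameter diagonal family, a fact the paper simply asserts.
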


\begin{remark}\label{equiv5}
For each $4$-tuple $(a_1,a_2,b_1,b_2)$, there is at least one and at most four solutions $(c_1,c_2)$ to the above soliton equations.  This therefore provides a pairwise non-homothetic (see Remark \ref{equiv4}) $3$-parameter family of expanding Laplacian coflow solitons on the Lie group $G_J$, whose corresponding Laplacian coflow solutions are therefore defined for $t\in (T,\infty)$ for some $T<0$.  If $\{ A,B,C\}$ is linearly dependent and satisfies the condition in the proposition, then $(G_{(a_1,\dots,c_2)},\vp)$ is still an algebraic soliton but on a different Lie group.  
\end{remark}

\begin{remark}
The corresponding coclosed $G_2$-structures on a compact quotient $M=G_J/\Gamma$ by a lattice $\Gamma$ are not Laplacian coflow solitons on $M$, since the vector field $X_D$ never descends to $M$.  However, the Laplacian coflow solutions do descend to $M$ to become immortal `locally self-similar' solutions in a sense.   
\end{remark}

\begin{proof}
We first note that the space of symmetric derivations of the Lie algebra of $G_{(a_1,\dots,c_2)}$ is given by 
$$
D=\Diag(0,0,0,d_3,d_4,d_5,d_6), \qquad  d_i\in\RR.  
$$
It follows from \eqref{Dsphi} that the algebraic soliton equation $\Delta\psi = \lambda\psi + \theta(D)\psi$ is equivalent to 
\begin{equation}\label{LS-eq}
\left\{\begin{array}{l}
r\omega_7=\lambda\omega_7 + (d_3+d_4)e^{34}+(d_5+d_6)e^{56}, \\ 
s\omega_2=\lambda\omega_2 + (d_3+d_6)e^{36}+(d_4+d_5)e^{45}, \\ 
t\omega_1=\lambda\omega_1 + (d_3+d_5)e^{35}-(d_4+d_6)e^{46}, \\ 
\lambda+d_3+\dots+d_6=0,
\end{array}\right.
\end{equation}
for certain positive numbers $r,s,t$.  The solutions to this system are precisely 
$$
d_3=\dots=d_6=d, \qquad r=s=t=\lambda+2d, \qquad \lambda=-4d,
$$
which implies that $\lambda=2r>0$.  The equations in the proposition correspond to $r=s=t$.   

On the other hand, the algebraic soliton equation $\Delta\psi + 2 d\big((m-\tr{T})\vp\big) = \lambda\psi + \theta(D)\psi$ for the modified Laplacian coflow adds an extra term to each of the three first equations in \eqref{LS-eq} given by certain linear combinations of $\overline{\omega}_i$'s (see \eqref{dphi-d}).  Thus such three linear combinations must vanish if $m-\tr{T}=m-\frac{7}{4}\tau_0=m$ is nonzero, yielding to a contradiction. 
\end{proof}

According to \eqref{ric-2} and \cite[Theorem 4.8]{solvsolitons}, $(G_{(a_1,\dots,c_2)},\ip)$ is a solvsoliton (i.e., $\Ricci=\lambda I+D$ for some $\lambda\in\RR$ and $D\in\Der(\ggo)$) if and only if either
\begin{equation}\label{LS1}
a_2=a_1, \; b_2=-b_1, \; c_2=c_1, \quad a_1^2=b_1^2=c_1^2,
\end{equation}
or,
\begin{equation}\label{LS2}
a_2=-a_1, \; b_2=b_1, \; c_2=-c_1, \quad a_1^2=b_1^2=c_1^2.   
\end{equation}
Note that in both cases, $(G_{(a_1,\dots,c_2)},\vp)$ is also a Laplacian coflow soliton with $\lambda=8a_1^2$ by Proposition \ref{LS}. 

The following example is in some sense the coclosed counterpart of the family of closed $G_2$-structures $(G_{a,b,c},\vp)$ studied in \cite{LS-ERP} (see Example \ref{abc}).   

\begin{example} \label{a1b1c1}
Consider the coclosed $G_2$-structures $(G_{(a_1,\dots,c_2)},\vp)$ such that  
$$
a_2=a_1, \quad b_2=-b_1, \quad c_2=c_1,   
$$
which we denote by $(G_{(a_1,b_1,c_1)},\vp)$.  According to Proposition \ref{diag-cc}, 
\begin{align*}
\tau_3 = & 2a_1\overline{\omega}_1\wedge e^{2} 
 +2c_1\overline{\omega}_7\wedge e^{1} + 2b_1\overline{\omega}_2\wedge e^{7}, \\ 
\Delta\vp =& 4c_1^2\omega_7\wedge e^7  + 4b_1^2\omega_2\wedge e^2 + 4a_1^2\omega_1\wedge e^1, \\
\Delta\psi  =& 4c_1^2\omega_7\wedge e^{12}  - 4b_1^2\omega_2\wedge e^{17} + 4a_1^2\omega_1\wedge e^{27}. 
\end{align*}
On the other hand, the Ricci operator is given by 
\begin{equation}\label{ric-a1}
\Ricci=\Diag(-4a_1^2,-4b_1^2,-4c_1^2,0,0,0,0),
\end{equation}
with respect to the basis $\{ e_7, e_1,\dots, e_6\}$ (see \eqref{ric-2}), showing that the family  
$$
\left\{(G_{(a_1,b_1,1)},\vp):a_1\geq b_1\geq 1\right\}
$$ 
is pairwise non-homothetic as the corresponding metrics are pairwise non-homothetic.  
The following conditions are therefore equivalent: 
\begin{enumerate}[{\small $\bullet$}]
\item $(G_{(a_1,b_1,c_1)},\vp)$ is a Laplacian coflow soliton.  

\item $(G_{(a_1,b_1,c_1)},\ip)$ is a solvsoliton.  

\item $F(a_1,b_1,c_1)= 3$ (see \eqref{RP}).  

\item $a_1^2=b_1^2=c_1^2$ (cf.\ \eqref{LS1}).  
\end{enumerate}
It follows from \eqref{ric-a1} that $F(a_1,b_1,c_1)\leq 3$, that is, the Laplacian coflow solitons are the only global maxima of $F$.   
\end{example}

\begin{example}\label{curve1} {\it Curve of solitons containing \eqref{LS1}}.  
According to Proposition \ref{LS}, the coclosed $G_2$-structure 
$$
(a_1,a_2,1,-1,c_1,c_2), \qquad a_1^2+a_2^2=2, 
$$
is a Laplacian coflow soliton provided that 
$$
(c_1+c_2)^2=2(3-a), \qquad (c_1-c_2)^2=4(1-a), \qquad a:=a_1a_2.  
$$
It follows from \eqref{ric-2} that,
$$
F(a)=\frac{9(3-a)^2}{5a^2-34a+41}, \quad\mbox{and so} \quad F'(a)=\frac{36(3-a)(a+5)}{(5a^2-34a+41)^2}.    
$$
Thus $F$ is strictly increasing for $-1\leq a\leq 1$, which corresponds to $a_1\in[1,\sqrt{2}]$ and $a_2=\pm\sqrt{2-a_1^2}\in [-1,1]$.  We therefore obtain that this gives an explicit pairwise non-homothetic family of Laplacian coflow solitons on $G_J$, which coincides with \eqref{LS1} at $a=1$ (i.e., $a_1=a_2=1$).  Note that $1.8 = F(-1) \leq F(a) \leq F(1)=3$ for any $a\in [-1,1]$.    
\end{example}

\subsection{Laplacian coflow evolution}
We study in this section the behavior of the Laplacian coflow among the $6$-parameter family of cocolosed $G_2$-structures $(G_{(a_1,\dots,c_2)},\vp)$ given in Proposition \ref{diag-cc} by using the bracket flow approach (see \cite[Section 3.3]{LF} or \cite{sol-HS} for more information).  

Given the Lie bracket of $\ggo_{(a_1,\dots,c_2)}$, 
$
\mu=\mu(a_1,a_2,b_1,b_2,c_1,c_2),
$
we set
$$
r:=(b_1+b_2)^2+(c_1+c_2)^2, \quad s:=(b_1-b_2)^2+(a_1-a_2)^2, \quad t:=(c_1-c_2)^2+(a_1+a_2)^2.  
$$
It follows from \eqref{Dsphi} that $\Delta\psi=\theta(Q_\mu)\psi$, where
$$
Q_\mu=\unm\Diag(-r+s+t, r+s-t, r-s+t, 0,0,0,0),
$$ 
and so a straightforward computation gives that the bracket flow 
$$
\mu'=\theta(Q_\mu)\mu:= Q_\mu\mu(\cdot,\cdot)-\mu(Q_\mu\cdot,\cdot)-\mu(\cdot,Q_\mu\cdot),
$$ 
is equivalent to the following ODE system:
$$
\left\{
\begin{array}{lcl}
a_1'=-\unm(-r+s+t)a_1,  &\quad&  a_2'=-\unm(-r+s+t)a_2,  \\
b_1'=-\unm(r+s-t)b_1,  &\quad&  b_2'=-\unm(r+s-t)b_2,  \\ 
c_1'=-\unm(r-s+t)c_1, &\quad&  c_2'=-\unm(r-s+t) c_2,  
\end{array}\right.
$$
which is in turn given by
\begin{equation}\label{ode}
\left\{
\begin{array}{l}
a_i'=\left(-(a_1^2+a_2^2)+2b_1b_2+2c_1c_2\right)a_i, \qquad i=1,2,   \\
b_i'=\left(-(b_1^2+b_2^2)+2a_1a_2-2c_1c_2\right)b_i, \qquad i=1,2,    \\ 
c_i'=\left(-(c_1^2+c_2^2)-2a_1a_2-2b_1b_2\right)c_i,  \qquad i=1,2.  
\end{array}\right.
\end{equation}

We first observe that it is not possible to prove {\it long-time existence} (i.e., the solution is defined for all $t\in [0,\infty)$, also called {\it immortal} solutions) for all the solutions of \eqref{ode} by showing that the square norm function $N:=a_1^2+\dots+c_2^2$ is always non-increasing, since this is not the case.  Indeed, an easy computation gives that $N'>0$ at any point $(a,a,1,-1,c,-c)$ such that $c>1$ and $a$ is sufficiently small.  

We next show long-time existence among two different subfamilies.  

\begin{example}\label{flow1} {\it Long-time existence I}.  
It follows from \eqref{ode} that the $3$-parameter family of coclosed $G_2$-structures considered in Example \ref{a1b1c1}, 
$$
\left(a,a,b,-b,c,c\right),  
$$
is invariant for the Laplacian coflow and the evolution is given by 
\begin{equation}\label{ode1}
\left\{
\begin{array}{l}
a'=\left(-2a^2-2b^2+2c^2\right)a,    \\
b'=\left(-2b^2+2a^2-2c^2\right)b,    \\
c'=\left(-2c^2-2a^2+2b^2\right)c.   
\end{array}\right.
\end{equation}
It is easy to see that the derivative of the square norm $N:=a^2+b^2+c^2$ satisfies that
\begin{align*}
\unm N' &= -2(a^4+b^4) - 4c^4 + 2c^2(a^2-b^2) + 4c^2(b^2-a^2) \\ 
&= -b^4-c^4+2b^2c^2 +(-2a^2-b^4-3c^4-2a^2c^2)\leq 0.
\end{align*}
Thus $N$ is strictly decreasing unless $a=b=c=0$, the only critical point of the ODE \eqref{ode1}.  In particular, any solution stays in a compact subset and so it is defined for all $t\in[0,\infty)$.  
\end{example}

\begin{example}\label{flow2} {\it Long-time existence II}.  
The $4$-parameter family of coclosed $G_2$-structures 
$$
\left(a,a,b,b,c_1,c_2\right),  
$$
is also invariant for the Laplacian coflow by \eqref{ode}, which is equivalent to
\begin{equation}\label{ode2}
\left\{
\begin{array}{l}
a'=\left(-2a^2+2b^2+2c_1c_2\right)a,    \\
b'=\left(-2b^2+2a^2-2c_1c_2\right)b,    \\
c_i'=\left(-(c_1^2+c_2^2)-2a^2-2b^2\right)c_i,  \qquad i=1,2.    
\end{array}\right.
\end{equation}
The function $N:=a^2+b^2+c_1^2+c_2^2$ therefore has
\begin{align*}
\unm N' 
&\leq -2(a^4+b^4) - (c_1^2+c_2^2)^2 + 4a^2b^2 - 2a^2(c_1^2+c_2^2-c_1c_2)-2b^2(c_1^2+c_2^2+c_1c_2) \\ 
&\leq -2(a^2-b^2)^2 - (c_1^2+c_2^2)^2  \leq 0,  
\end{align*}
and so $N$ is strictly decreasing unless $a^2=b^2$ and $c_1=c_2=0$, which are precisely the critical points of the flow.  Thus all these solutions are immortal and it follows from \cite[Theorem 3.8]{LF} or \cite[Corollary 6.5]{sol-HS}  that $(a,a, a, a,0,0)$ is a steady Laplacian coflow soliton (on a Lie group different from $G_J$).  
\end{example}

The bracket flow can also be used to study the convergence behavior of solutions (see \cite[Corollary 3.6]{LF}).  

\begin{example}\label{conv1} {\it Convergence to solitons I}. 
We analyze here the behavior at infinity of the $3$-parameter family in Example \ref{flow1}.  It was shown in Example \ref{a1b1c1} that $(1,1,1)$ is the only Laplacian coflow soliton among $a,b,c>0$, $a^2+b^2+c^2=3$.  By applying the method of projecting solutions on the Poincar\'e sphere (see, e.g., \cite[Section 3.10, Theorems 4 and 5]{Prk}), it is not hard to see that any positive solution to \eqref{ode1} satisfies that 
$$
\tfrac{\sqrt{3}}{\sqrt{a^2+b^2+c^2}} (a,b,c) \to (1,1,1).  
$$  
It follows from \cite[Corollary 3.6]{LF} that any Laplacian coflow flow solution $\vp(t)$ starting at one of these coclosed $G_2$-structures on $G_J$ satisfies that $u_0|\tau_3(t)|_t^3\vp(t)$ (for some constant $u_0>0$) smoothly converges up to pull-back by diffeomorphisms, as $t\to\infty$,  to the Laplacian coflow soliton on $G_J$ defined by $(1,1,1)$.
\end{example}

\begin{example}\label{conv2} {\it Convergence to solitons II}. 
We finally consider the $3$-parameter family 
$$
\left(a,a,b,b,c,c\right),  
$$
which is a subfamily of Example \ref{flow2}, it is invariant for the Laplacian coflow and the evolution is given by 
\begin{equation}\label{ode3}
\left\{
\begin{array}{l}
a'=2\left(-a^2+b^2+c^2\right)a,    \\
b'=2\left(-b^2+a^2-c^2\right)b,    \\
c'=2\left(-c^2-a^2-b^2\right)c.     
\end{array}\right.
\end{equation}
Concerning convergence, the fact that there are no compatible (i.e., $a,b,c\ne 0$) Laplacian coflow solitons in this family by Proposition \ref{LS} is particularly interesting.  Where are the solutions heading? What one obtains from the Poincar\'e sphere method is that the possible limits of normalized solutions $\tfrac{1}{\sqrt{a^2+b^2+c^2}} (a,b,c)$ in the first octant are given by $\tfrac{1}{\sqrt{2}} (1,1,0)$, $(1,0,0)$ and $(0,1,0)$.  Note that these correspond to steady solitons on Lie groups which are not isomorphic to $G_J$.  By using that $ab$ is constant in time, we obtain that all these normalized solutions are converging to the steady soliton $\tfrac{1}{\sqrt{2}} (1,1,0)$.  It follows from \cite[Corollary 3.6]{LF} that the solution $\vp(t)$ on $G_J$ smoothly converges up to pull-back by diffeomorphisms and scaling, as $t\to\infty$, to the steady Laplacian coflow soliton $(G_{(1,1,0)},\vp)$.
\end{example}

\end{document}